\documentclass[11pt,reqno]{amsart}
\calclayout
\usepackage{amssymb,mathtools,xcolor}
\usepackage[hidelinks]{hyperref}
\newtheorem{Thm}{Theorem}[section]
\newtheorem{Lem}[Thm]{Lemma}
\newtheorem{Prop}[Thm]{Proposition}
\newtheorem{Cor}[Thm]{Corollary}

\DeclareFontEncoding{OT2}{}{}
\DeclareFontSubstitution{OT2}{cmr}{m}{n}
\DeclareFontFamily{OT2}{cmr}{\hyphenchar\font45 }
\DeclareFontShape{OT2}{cmr}{m}{n}{
<5><6><7><8><9>gen*wncyr
<10><10.95><12><14.4><17.28><20.74><24.88>wncyr10}{}
\DeclareFontShape{OT2}{cmr}{b}{n}{
<5><6><7><8><9>gen*wncyb
<10><10.95><12><14.4><17.28><20.74><24.88>wncyb10}{}
\DeclareMathAlphabet{\mathcyr}{OT2}{cmr}{m}{n}
\DeclareMathAlphabet{\mathcyb}{OT2}{cmr}{b}{n}
\SetMathAlphabet{\mathcyr}{bold}{OT2}{cmr}{b}{n}

\newcommand{\bk}{\boldsymbol{k}}
\newcommand{\FF}{\mathbb{F}}

\newcommand{\ZZ}{\mathbb{Z}}
\newcommand{\QQ}{\mathbb{Q}}
\newcommand{\HH}{\mathfrak{H}}

\newcommand{\calA}{\mathcal{A}}
\newcommand{\calF}{\mathcal{F}}

\newcommand{\calS}{\mathcal{S}}
\newcommand{\calZ}{\mathcal{Z}}
\newcommand{\sh}{\mathbin{\mathcyr{sh}}}
\newcommand{\Ker}{\mathrm{Ker}}

\begin{document}
\title[A generating function identity for finite and symmetric multiple zeta values]{A generating function identity for finite and symmetric multiple zeta values via Dougall's bilateral ${}_2H_2$ summation}
\author{Takumi Maesaka, Masataka Ono}
\address{Faculty of Mathematics, Kyushu University, 744 Motooka, Nishi-ku, Fukuoka 819-0395, Japan}
\email{nozaki.takumi.912@s.kyushu-u.ac.jp}
\address{School of Business Administration, Senshu University, 744 Higashimita, Tama-ku, Kawasaki, Kanagawa 214-8580, Japan.}
\email{ono@isc.senshu-u.ac.jp}
\subjclass{11M32}
\keywords{Symmetric multiple zeta values, finite multiple zeta values, restricted sum formula, bilateral hypergeometric series}
\begin{abstract}

We establish a generating function identity for finite and symmetric multiple zeta values using Dougall's bilateral ${}_2H_2$ summation formula. As an application, we prove a restricted sum formula in which the parity of each component of the index is prescribed. 
We also derive a generating function identity for finite multiple zeta values $\zeta^{}_{\calA}(\{1,2\}^n)$ and symmetric multiple zeta values $\zeta^{}_{\calS}(\{1,2\}^n)$, thereby proving Kaneko's conjecture for finite multiple zeta values and its symmetric analogue. 
\end{abstract}
\maketitle

\section{Introduction}

For an index $\bk=(k_1,\dots,k_r)$ with $k_r\geq 2$, the multiple zeta value (MZV) $\zeta(\bk)$ is defined by
\[
\zeta(\bk)=\sum_{0<n_1<\cdots<n_r}\frac 1{n_1^{k_1}\cdots n_r^{k_r}}.
\]
Let $\calZ$ denote the $\QQ$-algebra generated by all MZVs.

For an index $\bk=(k_1,\dots,k_r)$, let
\[
\zeta^*(\bk;T)\in\calZ[T]
\]
denote its harmonic regularization; see Ihara, Kaneko and Zagier \cite{ikz}. We write
\[
\zeta^*(\bk)\coloneqq \zeta^*(\bk;0)
\]
and set $\zeta^*(\varnothing)\coloneqq1$. Following Kaneko and Zagier \cite{kz}, for an index $\bk=(k_1,\dots,k_r)$ we define the symmetric multiple zeta value by
\[
\zeta^{}_{\calS}(\bk)\coloneqq\sum_{i=0}^r(-1)^{k_{i+1}+\cdots+k_r}\zeta^*(k_1,\dots,k_i)\zeta^*(k_r,\dots,k_{i+1})\pmod{\pi^2\calZ},
\]
which is an element of the $\QQ$-algebra $\calZ_{\calS}:=\calZ/\pi^2\calZ$. Here, an empty sequence of arguments is interpreted as the empty index.

Finite multiple zeta values provide another analogue of MZVs. We define the $\QQ$-algebra $\calA$ by
\[
\calA\coloneqq\left(\prod_{p:\mathrm{prime}}\ZZ/p\ZZ\right)\bigg/\left(\bigoplus_{p:\mathrm{prime}}\ZZ/p\ZZ\right).
\]
Thus, two prime-indexed sequences represent the same element of $\calA$ if their components agree for all but finitely many primes. For an index $\bk=(k_1,\ldots,k_r)$, its finite multiple zeta value is defined by
\[
\zeta^{}_{\calA}(\bk)\coloneqq\left(\sum_{0<n_1<\cdots<n_r<p}\frac 1{n_1^{k_1}\cdots n_r^{k_r}}\mod p\right)_p\in \calA.
\]
We also set $\zeta^{}_{\calA}(\varnothing)\coloneqq 1$ and denote by $\calZ_{\calA}$ the $\QQ$-subalgebra of $\calA$ generated by all finite multiple zeta values. Kaneko and Zagier \cite{kz} conjectured that the correspondence
\[
\zeta^{}_{\calA}(\bk)\mapsto \zeta^{}_{\calS}(\bk)
\]
extends to an isomorphism of $\QQ$-algebras
\[
\calZ_{\calA}\simeq \calZ_{\calS}.
\]
This conjecture suggests that finite and symmetric multiple zeta values should satisfy parallel families of algebraic relations.


The sum formula for finite multiple zeta values was proved by Saito and Wakabayashi \cite{sw}, and its symmetric counterpart was established by Murahara \cite{m}. Subsequently, Murahara and Saito \cite{ms} proved restricted sum formulas for both finite and symmetric multiple zeta values. Theorem \ref{thm:sub} below concerns a different restriction: one prescribed component of the index is even and all the remaining components are odd.

Hypergeometric identities provide a useful approach to generating functions for sums of multiple zeta values. Ohno and Zagier \cite{oz} obtained a generating function for sums of multiple zeta values of fixed weight, depth, and height in terms of Riemann zeta values, using Gauss’s hypergeometric summation formula and the gamma function. Kawamura and the authors \cite{kmo} established weighted sum formulas for half multiple zeta values and their finite and symmetric analogues. Their proofs for half multiple zeta values and half symmetric multiple zeta values use Dougall’s ${}_5F_4$ summation formula.

In this paper, we establish an identity of generating function for finite and symmetric multiple zeta values in four formal parameters subject to the relation $ab=cd$ (Theorem \ref{thm:main}). For symmetric multiple zeta values, we derive the identity from Dougall’s bilateral ${}_2H_2$ summation formula by harmonic regularization. For finite multiple zeta values, we give an independent algebraic proof in characteristic $p$, based on a multiplicative 2-cocycle identity for a truncated hypergeometric series. The specializations of the resulting generating function yield the parity-restricted sums in Theorem \ref{thm:sub} and prove Kaneko’s conjecture \cite{k} on the generating series of $\zeta^{}_{\calA}(\{1,2\}^{n})$, together with its symmetric counterpart (Theorem \ref{thm:one-two}). Here, for $n\geq 0$ and an index $\bk=(k_1, \ldots, k_r)$, we write $\{k_1, \ldots, k_r\}^n$ for the $n$-fold concatenation of the index $\bk$, with $\{k_1, \ldots, k_r\}^0\coloneqq\varnothing$.

For a complex number $a$ and a nonnegative integer $n$, define
\begin{align*}
(a)_n&\coloneqq \begin{cases}
1&n=0,\\
a(a+1)\cdots(a+n-1)&n>0,
\end{cases}\\
(a)_{-n}&\coloneqq \frac{(-1)^n}{(1-a)_n}.
\end{align*}
For complex numbers $a_1,\dots,a_r$ and an integer $n$, we further write
\[
(a_1,\dots,a_r)_n\coloneqq (a_1)_n\cdots(a_r)_n.
\]
The main ingredient in our proofs is Dougall's bilateral ${}_2H_2$ summation formula.
\begin{Thm}[Dougall \cite{d}]\label{thm:dougall}
Let $a,b,c,d$ be complex numbers satisfying $\Re(c+d-a-b)>1$, and assume that both sides of the following identity are well-defined. Then
\[
\sum_{n\in\ZZ}\frac{(a,b)_n}{(c,d)_n}=\frac{\Gamma(c)\Gamma(d)\Gamma(1-a)\Gamma(1-b)\Gamma(c+d-a-b-1)}{\Gamma(c-a)\Gamma(d-a)\Gamma(c-b)\Gamma(d-b)}
\]
holds.
\end{Thm}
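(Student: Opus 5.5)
One classical route to Dougall's bilateral ${}_2H_2$ formula is to regard both sides as functions of $a$ and prove their equality by analytic continuation in $a$, combined with Gauss's summation theorem. The plan has three steps.

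\textbf{Step 1: a terminating case via Gauss.} First take $b$ and $c,d$ generic and let $a=1-m$ with $m$ a positive integer, or better, let $1-a$ approach a nonpositive integer so that the negative part of the series truncates. For $n\le -1$ we have
\[
(a)_n=\frac{(-1)^{|n|}}{(1-a)_{|n|}},\qquad \frac{(a,b)_{-n}}{(c,d)_{-n}}=\frac{(1-c,1-d)_n}{(1-a,1-b)_n}.
\]
A cleaner specialization is $d=1$. Then $(d)_n=(1)_n=n!$ and $1/(1)_{-n}=0$ for $n\ge1$, because $(1)_{-n}=(-1)^n/(0)_n$ is infinite. Hence the negative tail vanishes and the series becomes
\[
\sum_{n\ge0}\frac{(a)_n(b)_n}{(c)_n\,n!}={}_2F_1(a,b;c;1)=\frac{\Gamma(c)\Gamma(c-a-b)}{\Gamma(c-a)\Gamma(c-b)}.
\]
On the right-hand side with $d=1$, the factor $\Gamma(1-a)\Gamma(1-b)/(\Gamma(1-a)\Gamma(1-b))$ cancels, leaving exactly Gauss's value. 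So the identity holds at $d=1$, and by the symmetry of the formula also at $d=1+k$ after an index shift.

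\textbf{Step 2: a contiguous relation to move $d$.} To pass to general $d$, I would fix $a,b,c$ and show that both sides, viewed as functions of $d$, satisfy the same first-order recurrence under $d\mapsto d+1$.

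For the left side $S(d)$, use the telescoping identity
\[
\frac{(a,b)_n}{(c,d)_n}-\frac{(a,b)_{n+1}}{(c,d)_{n+1}}=\frac{(a,b)_n}{(c,d)_{n+1}}\bigl[(c+n)(d+n)-(a+n)(b+n)\bigr].
\]
Summing over $n\in\ZZ$ makes the left side vanish, given absolute convergence when $\Re(c+d-a-b)>1$. Splitting $(c+n)(d+n)-(a+n)(b+n)$ as a linear combination of $(d+n)$ and a constant then expresses $S(d)$ in terms of $S(d+1)$-type sums, also with $c\mapsto c+1$. I would choose the combination so that a single ratio
\[
\frac{S(d+1)}{S(d)}=\frac{d(c+d-a-b-1)}{(d-a)(d-b)}
\]
falls out, which is exactly the ratio of the right-hand side. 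Equivalently, the function $F(d)=S(d)/\mathrm{RHS}(d)$ is $1$-periodic in $d$.

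\textbf{Step 3: the periodicity argument (main obstacle).} Conclude that $F\equiv1$ by a growth argument. As $\Im d\to\pm\infty$, the bilateral sum tends to the $n=0$ term plus $o(1)$ contributions. Stirling's formula shows the right-hand side has comparable behaviour, so the bounded periodic function $F$ is constant, and the constant is fixed by Step 1.

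I expect this to be the hard part. Controlling the bilateral tails uniformly in $d$ requires care, and so does handling the poles, which is where the hypothesis ``both sides well-defined'' enters. If this becomes too delicate, I would instead use the standard alternative: treat both sides as meromorphic in $a$. Equality holds at $a=1-m$ for all positive integers $m$ by the terminating Step 1 argument with the roles of the parameters exchanged. Carlson's theorem then extends equality to all $a$, using the exponential-type bounds supplied by Stirling's formula.
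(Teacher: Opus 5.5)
The paper gives no proof of this statement: it quotes Dougall's formula from \cite{d} and uses it as a black box in Section~\ref{sec:mainS}. Your argument therefore has to stand on its own.

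Steps~1 and~2 are sound. For $d=m\in\ZZ_{>0}$ the negative tail stops at $n=1-m$. Shifting $n$ by $1-m$ reduces both sides, up to the common factor $(a,b)_{1-m}/(c,m)_{1-m}$, to Gauss's evaluation of ${}_2F_1(a+1-m,b+1-m;c+1-m;1)$. In Step~2 the split you want is
\[
(c+n)(d+n)-(a+n)(b+n)=(c+d-a-b)(d+n)-(d-a)(d-b).
\]
Summing your telescoping identity over $\ZZ$ then gives $d(c+d-a-b)\,S(a,b,c+1,d)=(d-a)(d-b)\,S(a,b,c+1,d+1)$. This is your ratio after renaming $c+1$ as $c$.

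The gap is Step~3, where the asymptotics you state are false. For $n=-m<0$ the term is $(1-c,1-d)_m/(1-a,1-b)_m$. Since $(1-d)_m$ has degree $m$ in $d$, these terms are unbounded as $|\Im d|\to\infty$; individually they grow exponentially in $|\Im d|$. The right-hand side does not tend to a constant either: by Stirling it is $\sim C\,d^{\,c-1}$. Since the identity is true, $S(d)$ behaves the same way, so in general it does not tend to $1$. It does so only when $c=1$, where the negative tail vanishes. Nothing you wrote shows that $F$ is bounded. Termwise estimates cannot show it either, because boundedness comes from cancellation.

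What rescues the argument is that boundedness is not needed.
\begin{itemize}
\item For generic $a,b,c$ and $\Re d$ large, $S$ is holomorphic and the right-hand side is holomorphic and zero-free. Hence the $1$-periodic $F$ is entire.
\item An entire $1$-periodic function with $|F(d)|=O(e^{\alpha|\Im d|})$, $\alpha<2\pi$, on a vertical strip is constant, since its Fourier coefficients with $k\neq0$ vanish.
\item Such a bound is available termwise. Write $(1-d)_m=\Gamma(m+1-d)/\Gamma(1-d)$, use $|\Gamma(x+iy)|\le\Gamma(x)$ for $x>0$, and use $1/\Gamma(1-d)=\Gamma(d)\sin(\pi d)/\pi$. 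This gives $|S(d)|=O(e^{\pi|\Im d|})$ uniformly for $\Re d$ in a unit interval far to the right.
\item On the same strip, $1/|\mathrm{RHS}(d)|$ grows only polynomially, by Stirling.
\end{itemize}
This estimate is the missing content of Step~3.

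Your Carlson fallback has the same hole. Stirling controls only the Gamma side. The exponential-type bounds for the series as a function of $1-a$, of type less than $\pi$ on vertical lines, are asserted rather than proved.
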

Let $\HH\coloneqq \QQ\langle x,y\rangle$ be the noncommutative polynomial algebra over $\QQ$ in the variables $x$ and $y$, and put $ \HH^1\coloneqq \QQ+y\HH$. Whenever $F$ is a function from the set of indices (including the empty index) to a $\QQ$-vector space $V$, we also use $F$ to denote the unique $\QQ$-linear map $F:\HH^1\to V$ determined by
\[
F(1_{\HH})\coloneqq F(\varnothing),\qquad F(yx^{k_1-1}\cdots yx^{k_r-1})\coloneqq F(k_1,\dots,k_r).
\]
We shall use this convention without further comment.

We now state our main result.
For $\calF \in \{\calA, \calS\}$ and a positive integer $n \ge2$, put
\[
Z_{\calF}(n)
\coloneqq 
\begin{cases}
\left(\frac{B_{p-n}}{n} \bmod{p}\right)_p \in \calA & \text{if $\calF=\calA$}, \\
\zeta(n) \bmod{\pi^2\calZ} & \text{if $\calF=\calS$}.
\end{cases}
\]
Here, $B_n \in \QQ$ is the Bernoulli number defined by the series $\frac{x}{e^x-1}=\sum_{n=0}^{\infty}\frac{B_n}{n!}x^n$. Note that the element $Z_{\mathcal{A}}(k)$ may be regarded as a finite analogue of the Riemann zeta value $\zeta(k)$.

\begin{Thm}\label{thm:main}
The following identity holds in $\calZ_{\calF}[[a,b,c,d]]/(ab-cd)$:
\begin{multline*}
1+ab\zeta^{}_{\calF}\left(\frac 1{1-y(a+b)-yxab}yx\frac 1{1+y(c+d)-yxcd}\right)\\
=\exp\bigg(\sum_{2\leq n}\frac{Z_{\calF}(n)}{n}(a^n+b^n+c^n+d^n+(a+b+c+d)^n\\
\qquad\qquad\qquad -(a+c)^n-(a+d)^n-(b+c)^n-(b+d)^n)\bigg).
\end{multline*}
\end{Thm}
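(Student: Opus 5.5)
We treat the two cases $\calF=\calS$ and $\calF=\calA$ separately, following the strategy announced in the introduction. In both cases we first expand the left-hand side as a generating series of truncated (or regularized) nested sums. The word $\frac{1}{1-y(a+b)-yxab}\,yx\,\frac{1}{1+y(c+d)-yxcd}$ decomposes into letters $y$ and $yx$ with weights $a+b$, $ab$ on the left block and $-(c+d)$, $cd$ on the right block. At the level of nested sums with a variable $m$, a block of $y$'s and $yx$'s with these weights sums to a product of factors $\bigl(1+\frac{a+b}{m}+\frac{ab}{m^2}\bigr)=\frac{(m+a)(m+b)}{m^2}$, respectively $\bigl(1-\frac{c+d}{m}+\frac{cd}{m^2}\bigr)=\frac{(m-c)(m-d)}{m^2}$. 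So the left-hand side should equal
\[
1+\sum_{n}\frac{ab}{n^2}\prod_{m<n}\frac{(m+a)(m+b)}{m^2}\prod_{m>n}\frac{(m-c)(m-d)}{m^2}.
\]
Using $ab=cd$, the term $n$ in this sum should telescope into a single ratio of Pochhammer symbols. Concretely, we would check that the $n$-th term is a difference of consecutive terms of $\frac{(a,b)_n}{(1-c,1-d)_n}$-type products. The constant $1$ then absorbs a boundary term, so that the whole left-hand side becomes a bilateral or truncated hypergeometric sum $\sum_n \frac{(a,b)_n}{(1+?,1+?)_n}$ times normalizing products. Pinning down this identity exactly, including the signs of $c,d$ and the shifts, is the first concrete step.

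For $\calS$: we write $\zeta_\calS$ via the defining formula as a sum over splittings $i$ of $\zeta^*(\text{left})\zeta^*(\text{reversed right})$. In generating-function form this is the harmonic-regularized value of the bilateral sum over $n\in\ZZ$: positive $n$ corresponds to the left factor and negative $n$ to the reversed right factor, with sign $(-1)^{\text{weight}}$ matching $(a)_{-n}=(-1)^n/(1-a)_n$. We then apply Theorem \ref{thm:dougall} with parameters of the form $(a,b;1-c,1-d)$ or their shifts. The condition $\Re(c+d-a-b)>1$ is handled by introducing an auxiliary shift (e.g. $d\mapsto d+N$ or a regularizing parameter $T$) and using that harmonic regularization corresponds to the constant term of the $\log N$ asymptotics. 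Finally, we expand the Gamma quotient via $\log\Gamma(1-x)=\gamma x+\sum_{n\ge2}\zeta(n)x^n/n$. The linear and $\gamma$ terms cancel because the arguments balance, and $ab=cd$ together with the Gamma factor $\Gamma(c+d-a-b-1)$ produce exactly the combination $a^n+b^n+c^n+d^n+(a+b+c+d)^n-\sum(\cdot+\cdot)^n$. Working modulo $\pi^2$ lets us ignore reflection-formula factors $\pi/\sin$, which contribute only even zeta values in a form lying in $\pi^2\calZ$ beyond the constant.

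For $\calA$: we replace the bilateral sum by the truncated sum over $0<n<p$ in $\FF_p[[a,b,c,d]]/(ab-cd)$. Here $\prod_{m<p}(m+a)/m$-type products are $1-a^{p-1}$-like, and the reversal $m\mapsto p-m$ plays the role of negative indices. We would then prove a multiplicative 2-cocycle identity: setting $F(a,b;c,d)=\sum_{n<p}\frac{(a,b)_n}{(1-c,1-d)_n}$ suitably normalized, show $F$ factors as $G(a)G(b)G(c)G(d)G(a+b+c+d)/\prod G(\cdot+\cdot)$ with $G(x)=\exp\bigl(\sum_n Z_\calA(n)x^n/n\bigr)$. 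For this we use $\sum_{m<p}m^{-n}$-type power sums and $\prod_{m<p}(1-x/m)\equiv 1-x^{p-1}$, which give $\log$ expansions with Bernoulli numbers $B_{p-n}/n$ via Kummer-type congruences $H_{p-1}(n)\equiv$ Bernoulli. I expect the main obstacle to be this finite step. There is no analytic Dougall formula, so the factorization must be established by a functional equation in the parameters (e.g. contiguous relations shifting $a\mapsto a+1$) together with uniqueness of solutions modulo $p$ to low degree. The secondary difficulty is the regularization bookkeeping in the $\calS$ case.
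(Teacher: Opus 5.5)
For $\calF=\calS$ your outline follows the paper's route. The $n$-th nested summand of $ab\,\zeta^{}_N(\cdots)$ is $\frac{(a,b)_n}{(1-c,1-d)_n}\cdot\frac{(1-c,1-d)_{N-1}}{(N-1)!^2}$. Harmonic regularization then gives $\sum_{n>0}\frac{(a,b)_n}{(1-c,1-d)_n}=ab\,\zeta^*(\cdots)\Gamma^*(1-c)\Gamma^*(1-d)$. Next, $\zeta^{}_{\calS}$ splits according to whether the distinguished $yx$ lies in the left factor or in the reversed right factor, the products $\Gamma^*(1+X)\Gamma^*(1-X)$ are $\equiv 1$ modulo $\pi^2\calZ$, and Dougall is applied to $(a,b;1-c,1-d)$.

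Three of your details are off, though. First, nothing telescopes: the identification of the $n$-th summand is immediate from $ab(1+a)_{n-1}(1+b)_{n-1}=(a,b)_n$ and uses no relation among $a,b,c,d$. Second, $ab=cd$ is used exactly once, in the half of $\zeta^{}_{\calS}$ where the cut precedes $yx$. That half is $ab\,\zeta^*(\frac1{1-y(a+b)-yxab})\,\zeta^*(\frac1{1-y(c+d)-yxcd}yx\frac1{1+y(a+b)-yxab})$. It matches the $n<0$ half $\sum_{n>0}\frac{(c,d)_n}{(1-a,1-b)_n}$ of the bilateral series only after the prefactor $ab$ is replaced by $cd$; the Gamma expansion needs no relation. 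Third, Dougall's hypothesis becomes $\Re(a+b+c+d)<1$, which holds near the origin, so no auxiliary shift is needed.

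The genuine gap is $\calF=\calA$. You write down the target factorization of $F_p=\sum_{n<p}\frac{(a,b)_n}{(1-c,1-d)_n}$ but supply no mechanism proving it. You call it a $2$-cocycle identity, but it is the final formula, not a cocycle relation in any variables. The tools you name do not, as stated, provide a proof.

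The depth-one sums $\sum_{m<p}m^{-n}$ vanish modulo $p$ when $p-1\nmid n$; Bernoulli numbers enter them only at order $p$. So $B_{p-n}\bmod p$ has to come from a depth-two quantity such as $\zeta^{}_p(\{1\}^{n-2},2)\equiv B_{p-n}$.

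Contiguous shifts $a\mapsto a+1$ do not act on $\FF_p[a,b,c,d]/(a,b,c,d)^{N+1}$, where the right-hand side lives, and they move off the quadric $ab=cd$. That relation is indispensable here. For every prime $p\geq5$, a direct computation shows the degree-$3$ parts of $F_p$ and of the right-hand side differ by $\frac{B_{p-3}}{3}(ab-cd)(2a+2b+c+d)$. Any proof must therefore build $ab=cd$ into its structure, and your proposal does not say how.

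The missing idea is to parametrize the quadric by $(a,b,c,d)=(xu,yv,xv,yu)$ and show that $\overline{G}_p(x,y;u,v)=F_p(xu,yv,xv,yu)$ is a multiplicative $2$-cocycle in $(x,y)$.
\begin{itemize}
\item Work in $E=\FF_p(u,v)[t]/(t^p+v/u)$ with $\theta=t\frac{d}{dt}$. The truncated series $H_X(t)=\sum_{n<p}\frac{(Xu)_n}{(1-Xv)_n}t^n$ satisfies $\theta H_X\equiv X\frac{v+ut}{1-t}H_X$ modulo $X^p$.
\item Hence $\theta$ annihilates $H_xH_y/H_{x+y}$, whose coefficients lie in $\Ker\theta=\FF_p(u,v)$.
\item Compare constant terms in $t$. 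This is where a reversal $n\mapsto p-n$ genuinely enters, via $-\frac vu\cdot\frac{(yu)_{p-n}}{(1-yv)_{p-n}}\equiv\frac{(yv)_n}{(1-yu)_n}$ modulo $y^{p-1}$. It gives $H_xH_y=\overline{G}_p(x,y;u,v)H_{x+y}$, and associativity of $H_xH_yH_z$ yields the cocycle identity.
\item Each homogeneous part of $\log\overline{G}_p$ is then an additive cocycle vanishing on the axes, hence a multiple of $(x+y)^n-x^n-y^n$.
\item The symmetry $(x,y)\leftrightarrow(u,v)$ upgrades this to a multiple of $((x+y)^n-x^n-y^n)((u+v)^n-u^n-v^n)$, which is precisely the image of the degree-$n$ polynomial in the exponent.
\item The single coefficient $[x^{n-1}yu^{n-1}v]\overline{G}_p=\zeta^{}_p(\{1\}^{n-2},2)\equiv B_{p-n}$ fixes the constant as $B_{p-n}/n^2=Z_{\calA}(n)/n$.
\end{itemize}
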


By specializing the variables and comparing the coefficients, we obtain the following theorem.
\begin{Thm}\label{thm:sub}
Let $k$ and $r$ be positive integers, and let $i$ be an integer with $1\leq i\leq r$. Then we have
\begin{align*}
\sum_{\substack{1\leq k_1,\dots,k_r\\k_1,\dots,k_{i-1},k_{i+1},\dots,k_r:\mathrm{odd}\\k_i:\mathrm{even}\\k_1+\cdots+k_r=k}}\zeta^{}_{\calF}(k_1, \ldots, k_r)\in\QQ[Z_{\calF}(3), Z_{\calF}(5),\dots].
\end{align*}
\end{Thm}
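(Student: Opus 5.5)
The key observation is that only odd zeta values can appear on the right-hand side of Theorem \ref{thm:main}: $Z_{\calF}(2m)=0$ for every $m\geq1$. For $\calF=\calS$ this holds because $\zeta(2m)\in\pi^2\calZ$. For $\calF=\calA$ it holds because $B_{p-2m}=0$ for $p>2m+1$, since $p-2m$ is then odd and at least $3$. Hence, whatever specialization of $a,b,c,d$ we take, every coefficient of the right-hand side lies in $\QQ[Z_{\calF}(3),Z_{\calF}(5),\dots]$. The task therefore reduces to choosing a specialization, and then possibly applying a linear transformation of the left-hand side, so that the coefficients of the left-hand side become exactly the parity-restricted sums in Theorem \ref{thm:sub}.

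The left-hand side of Theorem \ref{thm:main}, read through the convention $F(yx^{k-1}\cdots)$, involves only indices whose entries are $1$ or $2$. The plan is to use the duality available for both $\calF=\calA$ and $\calF=\calS$ (Hoffman's duality $\zeta_{\calF}(\bk)=-\zeta_{\calF}(\bk^\vee)$, which I will assume from the literature) to convert the left-hand side into a sum over general indices. After the initial $y$, this duality interchanges the letters $x$ and $y$.

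Under the duality, the left block $1/(1-y(a+b)-yxab)$ turns into runs of $x$'s weighted by powers of $a+b$, separated by single $y$'s weighted by $ab$. The middle $yx$ marks the distinguished position $i$, and the right block behaves similarly with $-(c+d)$ and $cd$. I would then specialize so that the run weights detect parity. Writing the left block as $\sum_{j}(\text{weight})^j$ over run lengths, the natural choice is to separate even and odd run lengths by the substitution $b\mapsto -b$ (averaging the identity at $(a,b)$ and $(a,-b)$), or more simply $b=-a$ and $d=-c$. In the latter case only even-length runs carry nonzero weight, so each dual component $k_j$ with $j\neq i$ is forced to be odd. The middle $yx$ contributes one extra letter, making $k_i$ even. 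The constraint $ab=cd$ becomes $a^2=c^2$, and the remaining free variable records the total weight $k$. Comparing the coefficients of $a^{k}$ and, where needed, a second grading variable recording the depth $r$ and the position $i$, should express the parity sum as a coefficient of the right-hand side.

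I expect the main obstacle to be this middle step: arranging the specialization and the duality so that the resulting coefficient is exactly the sum over indices of depth $r$ with the even entry in position $i$, and not a mixture over several depths or positions. If the specialization $b=-a$, $d=-c$ loses too much information (for instance, it collapses the depth grading), I would instead keep $a+b$ and $c+d$ as independent formal variables $s,t$, with $ab=cd=u$. I would then extract the coefficient of $s^{i-1}t^{r-i}$ (possibly after the duality), and check that this coefficient is a polynomial in $u$ whose coefficients are the desired parity sums. Since $Z_{\calF}(\text{even})=0$, this coefficient still lies in $\QQ[Z_{\calF}(3),Z_{\calF}(5),\dots]$. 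The remaining work, namely extracting the coefficient and confirming that it is polynomial in the odd $Z_{\calF}$'s with rational coefficients, is routine.
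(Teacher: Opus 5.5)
There is a genuine gap, and it sits exactly at the step you flag as the main obstacle. You have no valid mechanism that turns the left-hand side of Theorem~\ref{thm:main}, which involves only indices with entries in $\{1,2\}$, into parity-restricted sums over general indices.

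The duality you invoke does not do this. Hoffman's duality is a statement about star values, $\zeta^\star_\calF(\bk)=-\zeta^\star_\calF(\bk^\vee)$. Translate it to non-star values using $\zeta^\star(yw)=\zeta(y\phi(w))$ with $\phi(x)=x$, $\phi(y)=x+y$. It then reads $\zeta_\calF(yw)=-\zeta_\calF(y\psi(w))$ with $\psi(x)=x+y$, $\psi(y)=-y$. Every $x$ in a term of $\psi(w)$ sits where $w$ had an $x$, so $\psi$ never creates $xx$. It therefore maps $\{1,2\}$-indices to $\{1,2\}$-indices and cannot produce the general indices you need. The non-star form $\zeta_\calF(\bk)=-\zeta_\calF(\bk^\vee)$ you wrote is not available. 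Combined with the correct form it would give $\zeta_\calF(yw)=\zeta_\calF(y\chi(w))$ with $\chi(x)=-y$, $\chi(y)=x+y$. Since $\chi^3=-1$ on letters, this would force every even-weight value to vanish, e.g.\ $\zeta_\calA(1,2)^2$.

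Your specialization $b=-a$, $d=-c$ fails as well. It makes $a+b=c+d=0$, so the left side involves only $\zeta_\calF(\{2\}^n)=0$ and the identity collapses to $1=1$. In the fallback, the coefficient of $(a+b)^{i-1}(c+d)^{r-i}(ab)^m$ is a sum of $\zeta_\calF$ over $\{1,2\}$-indices. That it equals, up to sign, the desired parity sum of weight $r+1+2m$ is true. But proving it is the whole content of the theorem, not routine coefficient extraction.

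The missing idea is the paper's harmonic-product factorization.
\begin{itemize}
\item Specialize $(a,b,c,d)=(\alpha_+(u)t,\alpha_-(u)t,-\alpha_+(v)t,-\alpha_-(v)t)$, so that $a+b=ut$, $c+d=-vt$ and $ab=cd=-t^2$.
\item Use $1+\frac{ut}{k}-\frac{t^2}{k^2}=\bigl(1-\frac{t^2}{k^2}\bigr)\bigl(1+\frac{utk}{k^2-t^2}\bigr)$ together with $\frac{k}{k^2-t^2}=\sum_{m\geq 0}t^{2m}k^{-2m-1}$. On truncated sums, and then in $\HH^1$ by Yamamoto's linear independence of multiple harmonic sums, this gives
\[
\frac{1}{1-uty+yxt^2}yx\frac{1}{1-vty+yxt^2}=\frac{1}{1+yxt^2}*\frac{1}{1-uty\frac{1}{1-x^2t^2}}yx\frac{1}{1-x^2t^2}\frac{1}{1-vty\frac{1}{1-x^2t^2}}.
\]
\item Since $\zeta_\calF(k)=0$ for every $k$, the harmonic relation gives $\zeta_\calF\bigl(\frac{1}{1+yxt^2}\bigr)=1$. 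Hence the first factor disappears.
\item In the remaining factor, each $y\frac{1}{1-x^2t^2}$ produces an odd entry and the single $yx\frac{1}{1-x^2t^2}$ produces the even entry. So the coefficient of $u^{i-1}v^{r-i}t^k$ in $t^2\zeta_\calF(\cdots)$ is exactly the sum in Theorem~\ref{thm:sub}.
\item Compare with $1-\exp\bigl(\sum_n\frac{Z_\calF(n)}{n}q_n(u,v)t^n\bigr)$, where $q_n\in\QQ[u,v]$. Your correct observation that $Z_\calF(2m)=0$, which the paper uses only implicitly, then finishes the proof.
\end{itemize}
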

Note that Kaneko, Murakami and Yoshihara \cite{kmy} considered a similar restrected sum formula for finite multiple zeta values of level two. Its restredction is that one prescribed component of the index is odd and all the remaining components are even. They gave a conjecture on that restricted sum formula, which was recently solved by Zhenlu Wang in \cite{w}.

As another application of Theorem \ref{thm:main}, we obtain the following theorem, which was conjectured by Kaneko \cite{k} for the case $\calF=\calA$. 
\begin{Thm}\label{thm:one-two}
The following identity holds in $\calZ_{\calF}[[t]]$:
\[
\sum_{0\leq n}\zeta^{}_{\calF}(\{1,2\}^n)t^{n}=\exp\left(\sum_{0<n}\zeta^{}_{\calF}(1,3n-1)\frac{t^{n}}{n^2}\right).
\]
\end{Thm}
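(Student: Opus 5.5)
The plan is to deduce Theorem~\ref{thm:one-two} from Theorem~\ref{thm:main} by specializing $a,b,c,d$ along cube roots of unity, and then to match the two sides coefficientwise. I will first convert the right-hand side of Theorem~\ref{thm:one-two} into the language of $Z_{\calF}$. By the depth-two evaluation (Hoffman/Zhao for $\calA$, its analogue for $\calS$),
\[
\zeta^{}_{\calF}(1,3n-1)=(-1)^{3n-1}\binom{3n}{1}Z_{\calF}(3n),
\]
so the right-hand side of Theorem~\ref{thm:one-two} becomes $\exp\bigl(\sum_{n>0}(-1)^{n-1}\tfrac{3}{n}Z_{\calF}(3n)t^n\bigr)$. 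I also record that $Z_{\calF}(2m)=0$: for $\calS$ because $\zeta(2m)\in\pi^2\calZ$, and for $\calA$ because $B_{p-2m}=0$ for large $p$. Hence only odd $n$ survive in the exponent of Theorem~\ref{thm:main}, and the target exponent involves only $n\equiv 0\pmod 3$.

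Next I will choose the specialization. Let $\omega$ be a primitive cube root of unity, adjoined formally to the scalars; all final coefficients are rational, so this is harmless. Put
\[
a=u,\quad b=\omega u,\quad c=d=\omega^2 u,
\]
which satisfies $ab=cd=\omega u^2$. Then
\[
a+b+c+d=\omega^2u,\qquad a+c=a+d=-\omega u,\qquad b+c=b+d=-u .
\]
The exponent on the right of Theorem~\ref{thm:main} becomes
\[
\sum_n\frac{Z_{\calF}(n)}{n}u^n\bigl(1+\omega^n+3\omega^{2n}-2(-\omega)^n-2(-1)^n\bigr).
\]
For odd $n$ this equals $\tfrac{Z_{\calF}(n)}{n}u^n(3+\omega^n+3\omega^{2n})$. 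This is not yet purely a series in $u^3$, so I will average over the cube-root symmetry $u\mapsto\omega u$, or equivalently adjust the choice (for instance $c=\omega^2u$, $d=\omega^2 u$ versus $c=u$, $d=\omega u$ combinations). The aim is an exponent $\sum_m 3(-1)^{m-1}Z_{\calF}(3m)u^{3m}/m$ with $t=u^3$. Finding the exact specialization that makes the constants match is a finite check that I will carry out by comparing with the $n=1$ coefficient, namely $\zeta^{}_{\calF}(1,2)$.

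On the left side, with $A=y(a+b)+yx\,ab$ and $C=-y(c+d)+yx\,cd$, the element becomes
\[
1+ab\,(1-A)^{-1}yx(1-C)^{-1},
\]
and under the specialization $A$ and $C$ are both linear combinations of $y$ and $yx$ with coefficients in $u,u^2$. The key step is to show that $\zeta^{}_{\calF}$ of this element equals $\sum_n\zeta^{}_{\calF}(\{1,2\}^n)u^{3n}$. Equivalently, after expanding, all words other than $(y\,yx)^n$ must cancel in $\zeta^{}_{\calF}$ with the cube-root weights. I would first try to do this at the level of noncommutative series. Writing $1-A=(1-\alpha y)(1-\beta yx)$-type factorizations is impossible directly, so instead I will use $1-sy-qyx$ with $s,q$ chosen so that $1+ab(1-A)^{-1}yx(1-C)^{-1}$ telescopes to $(1-\omega u^3 y\,yx)^{-1}$ plus terms killed by the reversal relation
\[
\zeta^{}_{\calF}(k_r,\dots,k_1)=(-1)^{k_1+\cdots+k_r}\zeta^{}_{\calF}(k_1,\dots,k_r).
\]
Averaging over $u\mapsto\omega u$ should then retain only the degrees divisible by $3$.

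I expect this last identification to be the main obstacle: reducing the specialized noncommutative generating element to the generating series of $\{1,2\}^n$, modulo relations valid for both $\calA$ and $\calS$. If a pure word-level telescoping fails, my fallback is to combine Theorem~\ref{thm:main} with one further relation valid for both $\calA$ and $\calS$, namely the reversal relation together with the harmonic product. This would express the residual words in terms of lower $\{1,2\}$-values and the $Z_{\calF}(3m)$, and the claimed formula would then follow by induction on $n$.
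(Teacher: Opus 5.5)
\textbf{Genuine gap.} The statement hinges on identifying the specialized left-hand side of Theorem~\ref{thm:main} with $\sum_n\zeta^{}_{\calF}(\{1,2\}^n)t^{3n}$. You give no mechanism for this step, and the tools you name cannot supply it.

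\textbf{The specialization (a fixable slip).} Your choice $a=u$, $b=\omega u$, $c=d=\omega^2u$ is the right one, but you mis-evaluated the exponent. For odd $n$ the bracket is
\[
1+\omega^n+3\omega^{2n}+2\omega^n+2=3(1+\omega^n+\omega^{2n}),
\]
not $3+\omega^n+3\omega^{2n}$. So the exponent is already $\sum_m 3Z_{\calF}(3m)u^{3m}/m$, and no averaging or readjustment is needed. Averaging over $u\mapsto\omega u$ would not have worked anyway, since it averages $\exp(\cdot)$ rather than the exponent. With $u=\omega t$ your choice is exactly the paper's $a=\omega t$, $b=\omega^2t$, $c=d=t$. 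The left-hand side of Theorem~\ref{thm:main} then becomes
\[
1+t^2\zeta^{}_{\calF}\bigl(\tfrac1{1+yt-yxt^2}yx\tfrac1{1+2yt-yxt^2}\bigr),
\]
which has rational coefficients. There are no cube-root weights left to make unwanted words cancel.

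\textbf{Why reversal is too weak.} Reversal handles weight $3$: it turns $-\zeta^{}_{\calF}(1,2)-2\zeta^{}_{\calF}(2,1)$ into $\zeta^{}_{\calF}(1,2)$. From weight $6$ on, however, many different words must be related to $\zeta^{}_{\calF}(1,2,1,2)$. Reversal is only the special case of the $\calF$-shuffle relation in which one factor is empty.

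\textbf{What the paper actually uses.}
\begin{enumerate}
\item The full shuffle relation in the form $\zeta^{}_{\calF}\bigl(w\sh\frac1{1-yt}\bigr)=\zeta^{}_{\calF}\bigl(w\frac1{1+yt}\bigr)$.
\item The harmonic relation together with $\zeta^{}_{\calF}(\{1\}^n)=0$, which gives $\zeta^{}_{\calF}\bigl(w*\frac1{1+yt}\bigr)=\zeta^{}_{\calF}(w)$.
\item Lemma~\ref{lem:key},
\[
\Bigl(\tfrac1{1-yx\frac{t^2}{1-xt}}\sh\tfrac1{1-yt}\Bigr)*\tfrac1{1+yt}=\tfrac1{1-y^2xt^3},
\]
proved with the Ihara--Kaneko--Zagier automorphisms $\Phi^{\sh}_t$ and $\Phi^*_{-t}$. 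Combined with (1) and (2), it reduces $\sum_n\zeta^{}_{\calF}(\{1,2\}^n)t^{3n}$ to $1+\zeta^{}_{\calF}\bigl(\frac1{1-yx\frac{t^2}{1-xt}}yx\frac{t^2}{1-xt}\frac1{1+yt}\bigr)$.
\item The word identity
\[
\tfrac1{1-yx\frac{t^2}{1-xt}}yx\tfrac{t^2}{1-xt}\tfrac1{1+yt}*\tfrac1{1+yt}=t^2\tfrac1{1+yt-yxt^2}yx\tfrac1{1+2yt-yxt^2}.
\]
It is obtained by evaluating truncated sums $\zeta^{}_N$ as in Lemma~\ref{lem:truncated} and then invoking the linear independence of multiple harmonic sums.
\end{enumerate}
One more use of the harmonic relation then lands exactly on the specialized Theorem~\ref{thm:main}.

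Your fallback (``reversal plus harmonic product, induction on $n$'') contains no identity of this kind. As written, the proposal does not reach the theorem.
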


The remainder of this paper is organized as follows. In Section \ref{sec:mainS}, we prove Theorem \ref{thm:main} with $\calF=\calS$, and we prove Theorem \ref{thm:main} with $\calF=\calA$ in Section \ref{sec:mainA}. In Section \ref{sec:restricted sum formula}, we prove Theorem \ref{thm:sub}. In Section \ref{sec:one-two}, we derive Theorem \ref{thm:one-two} from Theorem \ref{thm:main} using the shuffle and harmonic relations for finite and symmetric multiple zeta values.

\section{Proof of Theorem \ref{thm:main} with $\calF=\calS$}\label{sec:mainS}

For integers $0\leq M<N$ and a nonempty index $\bk=(k_1,\dots,k_r)$, define
\begin{align*}
\zeta^{}_{M,N}(\bk)&\coloneqq \sum_{M<n_1<\cdots<n_r<N}\frac {1}{n_1^{k_1}\cdots n_r^{k_r}},
\end{align*}
and set $\zeta^{}_{M,N}(\varnothing)\coloneqq 1$. For any index $\bk$, put $\zeta^{}_N(\bk)\coloneqq \zeta^{}_{0,N}(\bk)$.

To prove Theorem \ref{thm:main}, we need the following lemma.
\begin{Lem}\label{lem:truncated}
Let $N$ be a positive integer. The following identity of formal power series in $a,b,c,d$ holds:
\[
\sum_{n=1}^{N-1}\frac{(a,b)_n}{(1-c,1-d)_n}=ab\zeta^{}_{N}\left(\frac 1{1-y(a+b)-yxab}yx\frac 1{1+y(c+d)-yxcd}\right)\frac{(N-1)!^2}{(1-c,1-d)_{N-1}}.
\]
\end{Lem}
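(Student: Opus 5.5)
Divide both sides by $\frac{(N-1)!^2}{(1-c,1-d)_{N-1}}$. Each term on the left becomes
\[
\frac{(a,b)_n}{(1-c,1-d)_n}\cdot\frac{(1-c,1-d)_{N-1}}{(N-1)!^2}
=\prod_{m=1}^{n}\frac{(a+m-1)(b+m-1)}{m^2}\;\prod_{m=n+1}^{N-1}\frac{(m-c)(m-d)}{m^2}\cdot\frac{n!^2}{\,n!^2\,}.
\]
Here I have used $(1)_n=n!$, so that $(1-c,1-d)_{N-1}/(1-c,1-d)_n=\prod_{m=n+1}^{N-1}(m-c)(m-d)$.

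The factors then regroup as follows:
\begin{itemize}
\item The $m=n$ factor of the first product is $(a+n-1)(b+n-1)/n^2$.
\item The remaining factors $m=1,\dots,n-1$ are $\prod_{m=1}^{n-1}\frac{(m+a)(m+b)}{m^2}\cdot\frac{ab}{n^2}$, after the index shift $(a)_n=a\prod_{m=1}^{n-1}(m+a)$.
\end{itemize}
So the summand equals
\[
\frac{ab}{n^2}\prod_{m=1}^{n-1}\Bigl(1+\frac{a+b}{m}+\frac{ab}{m^2}\Bigr)\prod_{m=n+1}^{N-1}\Bigl(1-\frac{c+d}{m}+\frac{cd}{m^2}\Bigr).
\]
This factorization is the whole content of the lemma.

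Next I would expand the products into iterated sums. Write $\prod_{m=1}^{n-1}(1+u/m+v/m^2)$ with $u=a+b$, $v=ab$. Expanding, it is the sum over increasing sequences $0<m_1<\dots<m_j<n$ of products of $u/m_i$ or $v/m_i^2$. Under the dictionary $y\leftrightarrow 1/m$ and $yx\leftrightarrow 1/m^2$ (a letter $yx^{k-1}$ contributes $1/m^k$), this is exactly the part of $\zeta_N$ coming from the word $\frac{1}{1-yu-yxv}=\sum_j (yu+yxv)^j$, restricted to variables below $n$.

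The middle factor $ab/n^2$ corresponds to $ab\cdot yx$ at position $n$. The last product, with $u'=-(c+d)$ and $v'=cd$, corresponds to $\frac{1}{1+y(c+d)-yxcd}$ over variables in $(n,N)$. Each word $yx^{k_1-1}\cdots$ appearing is in $\HH^1$, and $\zeta_N$ of a concatenation $w_1\,yx\,w_2$ decomposes by the position $n$ of the distinguished middle variable. Summing over $n=1,\dots,N-1$ therefore gives precisely $ab\,\zeta_N\bigl(\frac{1}{1-y(a+b)-yxab}\,yx\,\frac{1}{1+y(c+d)-yxcd}\bigr)$.

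The only delicate point is bookkeeping. First, one must check that $\zeta_N$ extended linearly to formal power series in $a,b,c,d$ with word coefficients is legitimate. It is: each coefficient of a monomial in $a,b,c,d$ involves finitely many words. Second, one must check that the noncommutative expansion matches the ordered product, with smaller $m$ corresponding to letters on the left. Both are routine, and I expect no real obstacle.
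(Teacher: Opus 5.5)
Your proof is correct and matches the paper's: you multiply through by $(1-c,1-d)_{N-1}/(N-1)!^2$, factor the summand as $\frac{ab}{n^2}\prod_{m<n}\bigl(1+\frac{a+b}{m}+\frac{ab}{m^2}\bigr)\prod_{n<m<N}\bigl(1-\frac{c+d}{m}+\frac{cd}{m^2}\bigr)$, and read it as $\zeta_n(\cdot)\,n^{-2}\,\zeta_{n,N}(\cdot)$, summed over the position $n$ of the distinguished $yx$. One small slip is that your two bullet points should not be multiplied together: the whole product $\prod_{m=1}^{n}\frac{(a+m-1)(b+m-1)}{m^2}$ already equals $\frac{ab}{n^2}\prod_{m=1}^{n-1}\frac{(m+a)(m+b)}{m^2}$, so the separate ``$m=n$ factor'' plays no role, though your displayed formula for the summand is right.
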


\begin{proof}
A direct calculation gives
\begin{align*}
&\sum_{n=1}^{N-1}\frac{(a,b)_n}{(1-c,1-d)_n}\frac{(1-c,1-d)_{N-1}}{(N-1)!^2}\\
&=ab\sum_{n=1}^{N-1}\left(\prod_{j=1}^{n-1}\left(1+\frac{a+b}j+\frac{ab}{j^2}\right)\right)\frac 1{n^2}\prod_{l=n+1}^{N-1}\left(1-\frac{c+d}l+\frac{cd}{l^2}\right)\\
&=ab\sum_{n=1}^{N-1}\zeta^{}_{N}\left(\frac 1{1-y(a+b)-yxab}\right)\frac 1{n^2}\zeta^{}_{n,N}\left(\frac 1{1+y(c+d)-yxcd}\right)\\
&=ab\zeta^{}_{N}\left(\frac 1{1-y(a+b)-yxab}yx\frac 1{1+y(c+d)-yxcd}\right).
\end{align*}
This completes the proof.
\end{proof}
The identities
\[
\frac{(1+X)_{N-1}}{(N-1)!}=\sum_{0\leq n}\zeta^{}_{N}(\{1\}^n)X^n
\]
and
\[
\sum_{0\leq n}\zeta^*(\{1\}^n)X^n=\frac 1{\Gamma^*(1+X)}\coloneqq\frac 1{e^{\gamma X}\Gamma(1+X)}
\]
hold, where $\gamma$ denotes the Euler--Mascheroni constant. Taking the limit $N\to\infty$ in Lemma \ref{lem:truncated} and applying harmonic regularization, we obtain the following corollary.
\begin{Cor}\label{cor:infinite}
The following identity of formal power series in $a,b,c,d$ holds:
\[
\sum_{0<n}\frac{(a,b)_n}{(1-c,1-d)_n}=ab\zeta^*\left(\frac 1{1-y(a+b)-yxab}yx\frac 1{1+y(c+d)-yxcd}\right)\Gamma^*(1-c)\Gamma^*(1-d).
\]
\end{Cor}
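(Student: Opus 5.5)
The plan is to pass to the limit $N\to\infty$ in Lemma \ref{lem:truncated} coefficientwise in $a,b,c,d$, using the asymptotic expansion of truncated multiple harmonic sums.

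\emph{Coefficients of the left-hand side.} Expand $(a,b)_n/(1-c,1-d)_n$ as a power series in $a,b,c,d$. Since $(a)_n = a\,(n-1)!\,(1+a)_{n-1}/(n-1)!$, the coefficient of each monomial is $ab/n^2$ times a polynomial in truncated sums. Hence every coefficient of $\sum_{n<N}$ converges absolutely as $N\to\infty$, and the coefficients of the left-hand side tend to those of $\sum_{0<n}$. This is also visible from the lemma itself: every word in the expansion ends in $yx$ followed by powers of $y$ and $yx$ to the right. So the left-hand side is unproblematic.

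\emph{The factor $(N-1)!^2/(1-c,1-d)_{N-1}$.} By the first displayed identity,
\[
\frac{(1-c)_{N-1}}{(N-1)!}=\sum_{n\ge 0}\zeta_N(\{1\}^n)(-c)^n .
\]
Each $\zeta_N(\{1\}^n)$ is a polynomial in $\log N$ plus $O(N^{-1}\log^m N)$. The regularization theorem of Ihara--Kaneko--Zagier says that $\zeta_N(\bk)=\zeta^*(\bk;\log N+\gamma)+o(1)$ for every index, uniformly in the sense needed coefficientwise. Therefore
\[
\frac{(1-c)_{N-1}}{(N-1)!}\approx \sum_n \zeta^*(\{1\}^n;T)(-c)^n \quad\text{with } T=\log N+\gamma .
\]
The generating function $\sum_n\zeta^*(\{1\}^n;T)X^n=e^{TX}\sum_n\zeta^*(\{1\}^n)X^n$ follows from $\zeta^*(\{1\}^n;T)$ being the harmonic-product image, since $\sum_n \{1\}^n X^n$ is grouplike for $*$ with $y\mapsto T$. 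So this factor equals $e^{-Tc}/\Gamma^*(1-c)$ asymptotically, and its reciprocal gives $e^{Tc}\Gamma^*(1-c)$, and likewise for $d$.

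\emph{The $\zeta_N$-factor on the right.} Write $w$ for the word series. Then $\zeta_N(w)=\zeta^*(w;T)+o(1)$ coefficientwise. The key step is to compute the $T$-dependence of $\zeta^*(w;T)$. The harmonic regularization map is $\mathrm{reg}_T = \exp(T\,\partial)$-type: for $w=u\,v$ with $u$ ending in $yx$, we have $\zeta^*(u\,y^m;T)$ expanded via $*$-shuffle with $y^j$. Concretely, I would prove that
\[
\zeta^*(w;T)=e^{-T(c+d)}\zeta^*(w)
\]
to leading order. Here $w$ is $ab\,A\,yx\,B$ with $B=1/(1+y(c+d)-yxcd)$, and only the trailing $y$'s of $B$ produce $T$. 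A cleaner route is to avoid the explicit computation. Multiply the lemma by $(1-c,1-d)_{N-1}/(N-1)!^2$; both sides are then finite sums of products of $\zeta_N$'s, that is, images of elements of $\HH^1$ under the algebra homomorphism $\zeta_N$ for $*$. The identity at level $N$ for all $N$ is then an identity $P=Q$ in $\HH^1_*$, by asymptotic independence, or more simply because it holds in the quasi-shuffle algebra realized by $\zeta_N$. Applying the algebra homomorphism $\zeta^*(\,\cdot\,;T)$ gives
\[
\mathrm{LHS}(T)\cdot\frac{e^{-T(c+d)}}{\Gamma^*(1-c)\Gamma^*(1-d)}=ab\,\zeta^*(w;T).
\]
Letting $N\to\infty$ shows that the left side equals $\sum_{0<n}$, times $e^{-T(c+d)}/\Gamma^*\Gamma^*$, exactly in the limit with $T=\log N+\gamma$. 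Since both sides are polynomials in $T$ coefficientwise, the limit statement forces the $T$-polynomial identity. The left-hand series is independent of $T$, and setting $T=0$ yields the corollary.

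\emph{Main obstacle.} The delicate point is justifying the passage from level-$N$ identities to the regularized identity. I would handle it by the standard fact that if $\sum_i c_i\zeta_N(w_i)\to L$ and $\zeta_N(w_i)=\zeta^*(w_i;\log N+\gamma)+O(N^{-1}\log^m N)$, then the polynomial $\sum_i c_i\zeta^*(w_i;T)$ must be constant in $T$ and equal to $L$. This works coefficient by coefficient, since each coefficient involves finitely many words.
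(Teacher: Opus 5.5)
Your proposal is correct and follows the paper's route. The paper's one-line proof passes to the limit $N\to\infty$ in Lemma \ref{lem:truncated} and applies harmonic regularization; you make this precise by regularizing both $\zeta_N(w)$ and $(N-1)!^2/(1-c,1-d)_{N-1}$ via $\zeta_N=\zeta^*(\,\cdot\,;\log N+\gamma)+O(N^{-1}\log^m N)$ and $\sum_n\zeta^*(\{1\}^n;T)X^n=e^{TX}/\Gamma^*(1+X)$, then noting that a coefficientwise polynomial in $T$ with a limit along $T=\log N+\gamma$ must be constant. Your final paragraph already completes the argument, so you can drop the detour through a word-level identity in $(\HH^1,*)$; if you keep it, it needs the linear independence of multiple harmonic sums, not merely the fact that $\zeta_N$ is a $*$-homomorphism.
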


\begin{proof}[Proof of Theorem \ref{thm:main} with $\calF=\calS$]
Define the $\QQ$-linear signed reversal map $\rho:\HH^1\to\HH^1$ by
\[
\rho(1_{\HH})=1_{\HH},\qquad
\rho(yx^{k_1-1}\cdots yx^{k_r-1})
=(-1)^{k_1+\cdots+k_r}
yx^{k_r-1}\cdots yx^{k_1-1}.
\]
We extend $\rho$ coefficientwise to $\HH^1[[a,b,c,d]]$. Put
\[
P=\frac{1}{1-y(a+b)-yxab},
\qquad
Q=\frac{1}{1+y(c+d)-yxcd}.
\]
Expanding $P$ and $Q$ as formal power series and splitting the defining sum for $\zeta^{}_{\calS}(PyxQ)$ according to whether the cut occurs before or after the distinguished factor $yx$, we obtain
\[
\zeta^{}_{\calS}(PyxQ)
=\zeta^*(PyxQ)\zeta^*(\rho(Q))
 +\zeta^*(P)\zeta^*(\rho(PyxQ)).
\]
Since
\[
\rho(Q)=\frac{1}{1-y(c+d)-yxcd}
\]
and
\[
\rho(PyxQ)
=\frac{1}{1-y(c+d)-yxcd}
 yx\frac{1}{1+y(a+b)-yxab},
\]
multiplying the preceding identity by $ab$ and using $ab=cd$ in the second term gives

\begin{align*}
&ab\zeta^{}_{\calS}\left(\frac 1{1-y(a+b)-yxab}yx\frac 1{1+y(c+d)-yxcd}\right)\\
&=ab\zeta^*\left(\frac 1{1-y(a+b)-yxab}yx\frac 1{1+y(c+d)-yxcd}\right)\zeta^*\left(\frac 1{1-y(c+d)-yxcd}\right)\\
&\qquad+cd\zeta^*\left(\frac 1{1-y(a+b)-yxab}\right)\zeta^*\left(\frac 1{1-y(c+d)-yxcd}yx\frac 1{1+y(a+b)-yxab}\right).
\end{align*}
Since
\begin{align*}
\zeta^{}_{N}\left(\frac 1{1-y(a+b)-yxab}\right)&=\prod_{n=1}^{N-1}\left(1+\frac{a+b}{n}+\frac{ab}{n^2}\right)\\
&=\frac{(1+a,1+b)_{N-1}}{(N-1)!^2},
\end{align*}
harmonic regularization yields
\begin{align*}
\zeta^*\left(\frac 1{1-y(a+b)-yxab}\right)&=\frac{1}{\Gamma^*(1+a)\Gamma^*(1+b)}.
\end{align*}
Hence, combining the identity above with Corollary \ref{cor:infinite} yields
\begin{align*}
&ab\zeta^{}_{\calS}\left(\frac 1{1-y(a+b)-yxab}yx\frac 1{1+y(c+d)-yxcd}\right)\\
&=\frac 1{\Gamma^*(1+c)\Gamma^*(1-c)\Gamma^*(1+d)\Gamma^*(1-d)}\sum_{0<n}\frac{(a,b)_n}{(1-c,1-d)_n}\\
&\qquad+\frac 1{\Gamma^*(1+a)\Gamma^*(1-a)\Gamma^*(1+b)\Gamma^*(1-b)}\sum_{0<n}\frac{(c,d)_n}{(1-a,1-b)_n}.
\end{align*}
In $\calZ_{\calS}$,
\[
\frac 1{\Gamma^*(1+X)\Gamma^*(1-X)}=\frac{\sin\pi X}{\pi X}\equiv 1\pmod{\pi^2\calZ}.
\]
Hence, we obtain
\begin{align*}
&ab\zeta^{}_{\calS}\left(\frac 1{1-y(a+b)-yxab}yx\frac 1{1+y(c+d)-yxcd}\right)\\
&=\sum_{0<n}\frac{(a,b)_n}{(1-c,1-d)_n}+\sum_{0<n}\frac{(c,d)_n}{(1-a,1-b)_n}\\
&=\sum_{n\in\ZZ}\frac{(a,b)_n}{(1-c,1-d)_n}-1\\
&=\frac{\Gamma(1-a)\Gamma(1-b)\Gamma(1-c)\Gamma(1-d)\Gamma(1-a-b-c-d)}{\Gamma(1-a-c)\Gamma(1-a-d)\Gamma(1-b-c)\Gamma(1-b-d)}-1.
\end{align*}
The last equality follows from Theorem \ref{thm:dougall}. Applying the identity 
\begin{align*}
    \Gamma(1-x)=\exp\left(\gamma x+\sum_{2\leq n}\frac{\zeta(n)}{n}x^n\right)
\end{align*}
to the above expression yields the desired result.
\end{proof}

\section{Proof of Theorem \ref{thm:main} with $\calF=\calA$}\label{sec:mainA}
In this section, we prove Theorem \ref{thm:main}. Applying Lemma \ref{lem:truncated}, we can rewrite Theorem \ref{thm:main} as follows.
\begin{Thm}\label{thm:rewrite_mainA}
    The following identity holds in $\calZ_{\calA}[[a,b,c,d]]/(ab-cd)\calZ_{\calA}[[a,b,c,d]]$:
\begin{align*}
\left(\sum_{n=0}^{p-1}\frac{(a,b)_n}{(1-c,1-d)_n}\mod p\right)_p&=\exp\bigg(\sum_{2\leq n}\frac{Z_{\mathcal{A}}(n)}{n}(a^n+b^n+c^n+d^n+(a+b+c+d)^n\\
&\qquad\qquad\qquad -(a+c)^n-(a+d)^n-(b+c)^n-(b+d)^n)\bigg).
\end{align*}
\end{Thm}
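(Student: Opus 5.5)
The plan is to work prime by prime with the truncated sum
\[
S_p(a,b,c,d)\coloneqq\sum_{n=0}^{p-1}\frac{(a,b)_n}{(1-c,1-d)_n}\in\FF_p[[a,b,c,d]],
\]
which is well defined because the constant term of each $(1-c)_n$ with $n\le p-1$ is $n!\not\equiv0$. Both sides are invariant under $a\leftrightarrow b$, under $c\leftrightarrow d$ and, modulo $ab-cd$, under the exchange $(a,b)\leftrightarrow(c,d)$. For the left side this last symmetry is the characteristic-$p$ shadow of bilateral summation: a full period $0\le n\le p-1$ plays the role of the sum over $\ZZ$ in Theorem~\ref{thm:dougall}.

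To make the relation $ab=cd$ usable, I would parametrize it as
\[
a=uv,\quad b=wz,\quad c=uw,\quad d=vz.
\]
The exponent on the right side is then an inclusion--exclusion combination $\sum_n\frac{Z_{\calA}(n)}{n}\Delta(x^n)$. Putting $E(x)\coloneqq\exp\bigl(\sum_{n\ge2}Z_{\calA}(n)x^n/n\bigr)$, the right side becomes
\[
\frac{E(a)E(b)E(c)E(d)E(a+b+c+d)}{E(a+c)E(a+d)E(b+c)E(b+d)},
\]
which has the same shape as the Gamma quotient in Dougall's formula, with $E$ playing the role of $\Gamma(1-x)$ modulo $\gamma$ and $\pi^2$.

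The core step is a multiplicative $2$-cocycle identity for the truncated series. Let $T_p(a,c)$ denote the two-parameter truncated sum obtained by degenerating one pair of parameters, for instance $b=d$. I would aim to prove, modulo $ab-cd$ and $p$, that
\[
S_p(a,b,c,d)\,T_p(a+c,b+d)=T_p(a,c)\,T_p(b,d)\,T_p(a+d,\cdot)\cdots,
\]
that is, $S_p$ factors as a ratio of one-variable pieces exactly matching the pattern of $E$ above. I would attempt this directly in $\FF_p[[\,\cdot\,]]$ through a contiguous relation: compare $S_p$ at $(a,b,c,d)$ and at shifted arguments using the telescoping identity $(a)_{n+1}-(a-1)_{n+1}\cdot(\text{factor})$. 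Periodicity modulo $p$ makes the boundary terms of the telescoping cancel. That cancellation is precisely where the full period $0\le n\le p-1$ (rather than $1\le n\le p-1$ as in Lemma~\ref{lem:truncated}) is essential.

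Given the factorization, $\log S_p$ is forced to be $\Delta$ applied to a single one-variable series $\phi_p(x)$, and it remains to identify $\phi_p$ with $\log E$. For this I would extract the lowest-order terms in $c$ and $d$ and use Lemma~\ref{lem:truncated} with $N=p$. The coefficients then become finite multiple zeta values of depth at most two, of the form $\zeta_{\calA}(1,\dots,1,2,1,\dots,1)$. By the sum formula of Saito--Wakabayashi, equivalently the classical congruence $\zeta_{\calA}(n-1,1)=\pm B_{p-n}/n$, these reduce to $\pm Z_{\calA}(n)$, and matching with $\log E$ fixes $\phi_p=\log E$ coefficientwise.

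I expect the main obstacle to be the cocycle/factorization step in characteristic $p$. No Gamma function is available there, so the analytic evaluation of Theorem~\ref{thm:dougall} must be replaced by a purely algebraic telescoping argument, and one must check that the truncation at $p-1$ does not leave a residual boundary term that is nonzero modulo $p$. My first attempt would be to verify the factorization on the degenerate slice $b=d$, where $S_p$ is a Saalschützian-type terminating sum with a known finite evaluation. I would then propagate it along the remaining relation $a=c$ (after the $u,v,w,z$ parametrization) by the shift argument.
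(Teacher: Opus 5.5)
Your parametrization $a=uv$, $b=wz$, $c=uw$, $d=vz$ is the paper's. Your final step, reading off one coefficient in each weight, is essentially the paper's final step. But the step in between, which is the heart of the theorem, is missing.

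Nothing in your outline shows that $\log S_p$ has the shape $\Delta\phi_p$. The displayed identity $S_p(a,b,c,d)\,T_p(a+c,b+d)=T_p(a,c)\,T_p(b,d)\,T_p(a+d,\cdot)\cdots$ is not a well-defined statement, and the mechanism you propose for it cannot be matched with the right-hand side, for two reasons.
First, any nontrivial integer shift of the parameters leaves the locus $ab=cd$. That condition is exactly what lets the full period $0\le n\le p-1$ imitate the sum over $\ZZ$.
Second, prime by prime the series $E$ exists only as a truncated formal power series: the identity lives in $\FF_p[a,b,c,d]/(ab-cd,(a,b,c,d)^{N+1})$ with $p>N+2$. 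So there is no analogue of $\Gamma(2-x)=(1-x)\Gamma(1-x)$ against which shifted values of $S_p$ could be compared.
Your proposed test case does not help either. On $b=d$ the relation forces $b(a-c)=0$, and $\sum_{n<p}\frac{(a,b)_n}{(1-a,1-b)_n}$ is a truncation of a non-terminating, non-balanced ${}_3F_2$, so Saalsch\"utz's theorem gives no evaluation.

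The missing idea is to put the contiguity in the summation index, through an auxiliary variable, rather than in the parameters. The target should be a cocycle identity in two of the new variables, not a factorization.
With $a=xu$, $b=yv$, $c=xv$, $d=yu$, the paper sets $h_n(X)=\frac{(Xu)_n}{(1-Xv)_n}$ and $H_X(t)=\sum_{n=0}^{p-1}h_n(X)t^n$ in $\FF_p(u,v)[t]/(t^p-\eta)$, with $\eta=-v/u$. The term ratio gives $(1-t)\,t\frac{d}{dt}H_X-X(v+ut)H_X=-Xv-(p-1+Xu)h_{p-1}(X)t^p$. The right side vanishes modulo $X^p$ because $(p-1+Xu)h_{p-1}(X)\equiv Xu$ and $t^p=-v/u$. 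This is where your intuition that a full period kills the boundary term is right.
Hence $t\frac{d}{dt}$ annihilates $H_xH_y/H_{x+y}$, which is therefore a constant. Comparing constant terms via the reflection $\eta h_{p-n}(y)\equiv\frac{(yv)_n}{(1-yu)_n}\pmod{y^p}$ identifies this constant with $G_p(x,y)\coloneqq S_p(xu,yv,xv,yu)$. Associativity of $H_xH_yH_z$ then gives $G_p(x,y)G_p(x+y,z)=G_p(y,z)G_p(x,y+z)$.
Solving the additive cocycle equation for each homogeneous part $L_n$ of $\log G_p$, using $L_n(x,0)=L_n(0,y)=0$, gives $L_n=\frac{A(u,v)}{n}\bigl((x+y)^n-x^n-y^n\bigr)$. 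The symmetry $c\leftrightarrow d$, i.e.\ $(x,y)\leftrightarrow(u,v)$, upgrades this to $L_n=\frac{\lambda_{p,n}}{n^2}Q_n(xu,yv,xv,yu)$.
Only then does your normalization enter: $\lambda_{p,n}$ is the coefficient of $x^{n-1}yu^{n-1}v$, which is $\zeta^{}_{p}(\{1\}^{n-2},2)\equiv B_{p-n}$, i.e.\ $nZ_{\calA}(n)$. Check your constants there. These values have depth $n-1$, not at most two, and $\zeta^{}_{\calA}(n-1,1)=-B_{p-n}=-nZ_{\calA}(n)$, not $\pm B_{p-n}/n$.
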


Fix a positive integer $N$ and a prime $p$ satisfying $p>N+2$. Put
\begin{align*}
F_p(a,b,c,d) &\coloneqq \sum_{n=0}^{p-1}\frac{(a,b)_n}{(1-c,1-d)_n}
\end{align*}
and
\begin{align*}
Q_n(a,b,c,d) &\coloneqq a^n+b^n+c^n+d^n+(a+b+c+d)^n\\
&\qquad\qquad\qquad -(a+c)^n-(a+d)^n-(b+c)^n-(b+d)^n.
\end{align*}
To prove Theorem \ref{thm:main} with $\calF=\calA$, it is enough to show the equality
\begin{align}\label{eq:enough_to_show}
F_p(a,b,c,d)=\exp\left(\sum_{n=2}^{N}\frac{B_{p-n}}{n^2}Q_n(a,b,c,d)\right)
\end{align}
in $R_{p,N}\coloneqq \FF_p[a,b,c,d]/(ab-cd,(a,b,c,d)^{N+1})$.
Set $a=xu, b=yv, c=xv, d=yu$. Since $ab-cd=0$, we can define a natural $\FF_p$-algebra homomorphism $\varphi\colon\FF_p[[a,b,c,d]]/(ab-cd)\to \FF_p[[x,y,u,v]]$, which is injective. Moreover, since $\varphi((a,b,c,d)^{N+1}) \subset (x,y)^{N+1}$, a well-defined injective homomorphism $\overline{\varphi}_N\colon R_{p,N}\to \FF_p[u,v][x,y]/(x,y)^{N+1}$ is induced. Put 
\begin{align*}
\overline{G}_p(x,y;u,v)
&\coloneqq F_p(xu,yv,xv,yu)\bmod{(x,y)^{N+1}} \\
&=
\sum_{n=0}^{p-1}\frac{(xu)_n(yv)_n}{(1-xv)_n(1-yu)_n} \bmod{(x,y)^{N+1}}\in \FF_p[u,v][x,y]/(x,y)^{N+1}.
\end{align*}

\begin{Prop}\label{prop:G_p 1-cocyle}
We obtain the following equality
\begin{align}\label{eq:G_p 1-cocyle}
\overline{G}_p(x,y;u,v)\overline{G}_p(x+y,z;u,v)=\overline{G}_p(y,z;u,v)\overline{G}_p(x,y+z;u,v).
\end{align}
\end{Prop}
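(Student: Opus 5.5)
The plan is to reduce \eqref{eq:G_p 1-cocyle} to a coboundary-type factorization that can be verified by polynomial (Vandermonde-type) identities in characteristic $p$. The guiding heuristic comes from the complex case treated in Section~\ref{sec:mainS}. Under $a=xu$, $b=yv$, $c=xv$, $d=yu$ we have
\[
a+c=x(u+v),\quad b+d=y(u+v),\quad a+d=(x+y)u,\quad b+c=(x+y)v,\quad a+b+c+d=(x+y)(u+v).
\]
Hence the Dougall value $\Gamma(1-a)\Gamma(1-b)\Gamma(1-c)\Gamma(1-d)\Gamma(1-a-b-c-d)/\prod\Gamma(1-\cdot)$ becomes $f(x)f(y)/f(x+y)$, where
\[
f(x)=\frac{\Gamma(1-xu)\Gamma(1-xv)}{\Gamma(1-x(u+v))}.
\]
A function of this shape is a multiplicative coboundary, so it automatically satisfies the 2-cocycle relation \eqref{eq:G_p 1-cocyle}. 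The goal is to realize the same mechanism for $\overline{G}_p$ with $p$-truncated objects, without yet knowing the final closed form.

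\emph{Step 1.} Rewrite the denominators using characteristic $p$. For $0\le n\le p-1$ one has $(1-c)_n=(-1)^n(c-n)_n$, and $c-n\equiv c+(p-n)$ modulo $p$. Combining this with $(c)_{p}=c(c+1)\cdots(c+p-1)\equiv c^p-c$, one gets
\[
(1-c)_n\,(c)_{p-1-n}\equiv (-1)^n\,(c)_{p-1}\cdot(\text{unit})
\]
up to corrections involving $c^p$, which vanish in $\FF_p[u,v][x,y]/(x,y)^{N+1}$ because $p>N+2$. Here the unit is essentially $(p-1-n)!\,n!$-type factors, so by Wilson the right-hand side is $\pm\binom{p-1}{n}^{-1}$-type constants. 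Thus
\[
\overline{G}_p(x,y;u,v)=\frac{1}{(xv)_{p-1}(yu)_{p-1}}\sum_{n=0}^{p-1}\epsilon_n\,(xu)_n(yv)_n(xv)_{p-1-n}(yu)_{p-1-n}
\]
for explicit constants $\epsilon_n\in\FF_p^\times$. This turns $\overline{G}_p$ into a terminating convolution of "binomial-type" polynomial sequences $X\mapsto (X)_n/n!$.

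\emph{Step 2.} Exploit the binomial-type identity
\[
\frac{(X+Y)_n}{n!}=\sum_{k}\frac{(X)_k}{k!}\frac{(Y)_{n-k}}{(n-k)!},
\]
valid over $\ZZ$ and hence modulo $p$ for $n<p$. Using it, expand both sides of \eqref{eq:G_p 1-cocyle} as double sums over $0\le n,m\le p-1$.

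On the left, the factor $\overline{G}_p(x+y,z)$ contains $((x+y)u)_m$ and $((x+y)v)_{p-1-m}$. These split via Vandermonde into pieces in $x$ and $y$, which then recombine with the $n$-sum of $\overline{G}_p(x,y)$. On the right, $\overline{G}_p(x,y+z)$ splits $(y+z)$ in the same way.

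After normalizing by the common prefactors $(xv)_{p-1}(yu)_{p-1}\cdots$, both sides should reduce to the same triple sum, symmetric under the associativity rearrangement $(x,y,z)$. This is the discrete analogue of $f(x)f(y)f(z)/f(x+y+z)$.

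\emph{Main obstacle.} The hard step is making the prefactors match. The normalizing products $(xv)_{p-1}$, $((x+y)v)_{p-1}$, and so on, must combine consistently on both sides. In addition, all discarded terms must genuinely lie in $(x,y,z)^{N+1}$, which requires tracking that every $X^p$-type correction has total degree at least $p>N+2$.

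If the direct Vandermonde bookkeeping becomes unwieldy, the fallback is a density argument. For integer specializations $xu=-m$ (and analogous ones), the sequences $(xu)_n$ terminate. The identity then reduces to finite Chu--Vandermonde sums in $\FF_p$, and enough such specializations of the low-degree coefficients determine the polynomial identity modulo $(x,y)^{N+1}$, since all degrees involved are less than $p$.
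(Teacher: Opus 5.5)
Your coboundary heuristic is the right one, but the proposal never realizes it in characteristic $p$, and Step~2 is an expectation rather than an argument. Step~2 is the only step that would actually prove the identity. After you Vandermonde-split $((x+y)u)_m$ and the companion factor in $(x+y)v$, and multiply by the $n$-sum for $\overline{G}_p(x,y)$, you get products such as $(xu)_n(xu)_k$ or $(1+xv)_{p-1-n}(xv)_j$. These are not Pochhammer symbols of a single index. Nothing you have written forces the resulting double sums on the two sides to coincide, and there is no reason for the cocycle identity to hold term by term.

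What is missing is an explicit trivializing cochain. The paper constructs it in the ring $E=\FF_p(u,v)[t]/(t^p+v/u)$, where the truncated series $H_X(t)=\sum_{n=0}^{p-1}\frac{(Xu)_n}{(1-Xv)_n}t^n$ satisfies
\[
t\frac{d}{dt}H_X(t)\equiv X\,\frac{v+ut}{1-t}\,H_X(t)\pmod{X^p}.
\]
This holds because the boundary term $-(p-1+Xu)h_{p-1}(X)t^p$ from the top index is congruent to $Xv$ modulo $X^p$, using $(Xu)_p/(1-Xv)_{p-1}\equiv Xu$ and $t^p=-v/u$. It therefore cancels the leftover $-Xv$. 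Consequently $H_xH_y/H_{x+y}$ is killed by $t\frac{d}{dt}$, so it has no positive powers of $t$. Its $t^0$-coefficient $1-\frac{v}{u}\sum_{n=1}^{p-1}h_n(x)h_{p-n}(y)$ is exactly $\overline{G}_p(x,y)$. Thus $\overline{G}_p(x,y)=H_xH_y/H_{x+y}$ is a genuine coboundary, and the proposition is just associativity of $H_xH_yH_z$. Without this construction, or an equivalent one, your plan does not reach the statement.

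There are two further problems. First, Step~1 is incorrect as written. The product $(xv)_{p-1}=xv(1+xv)\cdots(p-2+xv)$ is divisible by $xv$, so it is not invertible in $\FF_p[u,v][[x,y]]$. Your $n=p-1$ summand is not divisible by $xv$, and the correction factor is not an $n$-dependent constant. The correct reflection is $(1-c)_n(1+c)_{p-1-n}\equiv(-1)^{n+1}(1-c^{p-1})$, which yields the separated form $\overline{G}_p(x,y)\equiv\sum_{n=0}^{p-1}(xu)_n(1+xv)_{p-1-n}\,(yv)_n(1+yu)_{p-1-n}$ with no prefactors at all. So your ``main obstacle'' is an artifact of the faulty rewriting. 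This separated form is a good starting point: it is the same $n\leftrightarrow p-n$ pairing that appears in the $t^0$-coefficient of $H_xH_y$.

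Second, the fallback density argument does not work. The proposition is an identity in a ring truncated modulo $(x,y,z)^{N+1}$, and reduction modulo this ideal does not commute with evaluation at nonzero points such as $xu=-m$. You have not identified an exact, untruncated identity that could be interpolated instead; even the differential equation above only holds modulo $X^p$. Moreover, these specializations tie $x$ to $u$ and range over only finitely many values in $\FF_p$.
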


\begin{proof}
Set $\eta\coloneqq-\frac{v}{u}, E\coloneqq \FF_p(u,v)[t]/(t^p-\eta)$ and define a $\FF_p(u,v)$-linear map $\theta\colon E\to E$ by $\theta \coloneqq t\frac{d}{dt}$. Since $\{1, t, \ldots, t^{p-1}\}$ is a $\FF_p(u,v)$-basis of $E$ and $\theta(t^j)=jt^j \ (1\le j \le p-1)$, we have $\Ker\,\theta=\FF_p(u,v)$.
For an indeterminate $X$ and a non-negative integer $0 \le n \le p-1$, set
\begin{align*}
h_n(X)
\coloneqq \frac{(Xu)_n}{(1-Xv)_n} \in \FF_p(u,v)(X), \quad
H_X(t)\coloneqq \sum_{n=0}^{p-1}h_n(X)t^{n} \in \FF_p(u,v)(X)[t].
\end{align*}
Then we have
\begin{align*}
(1-t)\theta H_X(t)-X(v+ut)H_X(t)
&=\sum_{n=0}^{p-1}(n-Xv)h_n(X)t^n-\sum_{n=0}^{p-1}(n+Xu)h_n(X)t^{n+1}\\
&=-Xv-(p-1+Xu)h_{p-1}(X)t^p.
\end{align*}
Moreover, we have
\begin{align*}
(p-1+Xu)h_{p-1}(X)
&=\frac{(Xu)_p}{(1-Xv)_{p-1}}
=\frac{Xu-X^pu^p}{1-X^{p-1}v^{p-1}}\\
&\equiv Xu \bmod{X^p}. 
\end{align*}
Since $t^p=\eta=-v/u$ and $1-t$ is invertible in $E$, we have 
\begin{align*}
\theta H_X(t)\equiv XR(t)H_X(t) \bmod{X^p},
\end{align*}
where $R(t)\coloneqq\frac{v+ut}{1-t}$. Then, putting $X=x,y,x+y$, we have the following equalities in $E[x,y]/(x,y)^p$:
\begin{align*}
\theta H_x(t)\equiv xR(t)H_x(t),\quad \theta H_y(t)\equiv yR(t)H_y(t), \quad
\theta H_{x+y}(t)\equiv (x+y)R(t)H_{x+y}(t).
\end{align*}
By differentiating above, we have
\begin{align*}
\theta\left(\frac{H_x(t)H_y(t)}{H_{x+y}(t)}\right)\equiv 0 \bmod{(x,y)^p}.
\end{align*}
From above, there exists $C_p(x,y) \in E[x,y]/(x,y)^{N+1}$ satisfying 
\begin{align}\label{eq:H_xH_y=CH_{x+y}}
H_x(t)H_y(t)=C_p(x,y)H_{x+y}(t).
\end{align}
By comparing the constant term in \eqref{eq:H_xH_y=CH_{x+y}} and using $t^p=\eta=-v/u$, we have
\begin{align*}
C_p(x,y)=1+\eta\sum_{n=1}^{p-1}h_n(x)h_{p-n}(y).
\end{align*}
Moreover, since
\begin{align*}
\eta h_{p-n}(y)
=\frac{1-y^{p-1}u^{p-1}}{1-y^{p-1}v^{p-1}}\frac{(yv)_n}{(1-yu)_n}
\equiv \frac{(yv)_n}{(1-yu)_n} \bmod{y^p},
\end{align*}
we see that
\begin{align}\label{eq:C_p=G_p}
C_p(x,y)=\overline{G}_p(x,y;u,v).
\end{align}
Substituting \eqref{eq:C_p=G_p} into \eqref{eq:H_xH_y=CH_{x+y}}, we have 
\begin{align}\label{eq:H_xH_y=G_pH_{x+y}}
H_x(t)H_y(t)=\overline{G}_p(x,y;u,v)H_{x+y}(t).
\end{align}
Therefore, using \eqref{eq:H_xH_y=G_pH_{x+y}} to rewrite
\begin{align*}
    (H_x(t)H_y(t))H_z(t)=H_x(t)(H_y(t)H_z(t))
\end{align*}
into $H_{x+y+z}(t)$, we complete the proof.
\end{proof}
Next, put
\begin{align*}
\log \overline{G}_p
\coloneqq \sum_{r=1}^{N}\frac{(-1)^{r+1}}{r}(\overline{G}_p-1)^r \in \FF_p[u,v][x,y]/(x,y)^{N+1}.
\end{align*}
Since $\overline{G}_p-1 \in \FF_p[u,v][x,y]/(x,y)^{N+1}$ is a polynomial of degree $N$, $\log \overline{G}_p$ is also a polynomial of degree $N$. Let $L_n$ denote the degree $n$ part of $\log\overline{G}_p$, then we have $\log \overline{G}_p= \sum_{n=2}^{N}L_n$.

\begin{Prop}\label{prop:L_n}
We have
\begin{align*}
L_n(x,y;u,v)=\frac{B_{p-n}}{n^2}((x+y)^n-x^n-y^n)((u+v)^n-u^n-v^n).
\end{align*}
\end{Prop}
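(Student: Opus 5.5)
The plan is to use the cocycle identity of Proposition \ref{prop:G_p 1-cocyle} to pin down the shape of each $L_n$ up to a scalar polynomial in $u,v$, and then compute that scalar from a single coefficient of $\overline{G}_p$.

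\emph{Step 1: the additive cocycle.} Taking $\log$ of \eqref{eq:G_p 1-cocyle} is legitimate in $\FF_p[u,v][x,y,z]/(x,y,z)^{N+1}$, since $N<p$ and $\overline{G}_p\equiv 1$ modulo the augmentation ideal. Passing to degree-$n$ parts gives
\[
L_n(x,y)+L_n(x+y,z)=L_n(y,z)+L_n(x,y+z),
\]
so each $L_n$ is a homogeneous symmetric $2$-cocycle of degree $n$ in $x,y$. Symmetry holds because $F_p(a,b,c,d)$ is symmetric in $a\leftrightarrow b$ together with $c\leftrightarrow d$, and this corresponds to $x\leftrightarrow y$. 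We also have $L_n(x,0)=0$, because at $y=0$ we get $b=d=0$ and $\overline{G}_p=1$ (the terms with $n\ge1$ contain $(0)_n=0$). Moreover $L_1=0$, since the linear part of $F_p$ vanishes.

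\emph{Step 2: uniqueness of the cocycle.} The classical fact (Lazard) is that for $n<p$ every homogeneous symmetric polynomial $2$-cocycle of degree $n$ over a $\ZZ_{(p)}$- or $\FF_p$-algebra is a scalar multiple of the coboundary $(x+y)^n-x^n-y^n$. Here the coefficient ring is $\FF_p[u,v]$, and for $n\le N<p$ the Lazard exceptional primes do not occur. It follows that
\[
L_n=\lambda_n(u,v)\bigl((x+y)^n-x^n-y^n\bigr)
\]
with $\lambda_n\in\FF_p[u,v]$ homogeneous of degree $n$ (since $L_n$ has total degree $n$ in $a,b,c,d$).

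\emph{Step 3: computing $\lambda_n$.} I read off $\lambda_n$ from the coefficient of $xy^{n-1}$, which equals $n\lambda_n$. Only the part of $\overline{G}_p$ linear in $x$ matters. Any product of two or more factors $(\overline{G}_p-1)$ in which each factor contains $y$ is $O(y^2)$, but the relevant quantity is the coefficient of $x^1$. Every nonconstant term of $\overline{G}_p$ has positive degree in both $x$ and $y$, because a term needs both $a$ (or $c$) and $b$ (or $d$). Hence $(\overline{G}_p-1)^r$ has $x$-degree at least $r$, and for the $x^1$-coefficient only $r=1$ contributes. So I need the coefficient of $x$ in
\[
\sum_{m\ge1}\frac{(xu)_m(yv)_m}{(1-xv)_m(1-yu)_m}.
\]
The factor $(xu)_m$ contributes $xu(m-1)!$, and the factor $1/(1-xv)_m$ contributes $1/m!$ at $x=0$, giving
\[
\sum_{m=1}^{p-1}\frac{u}{m}\,\frac{(yv)_m}{(1-yu)_m}.
\]
This is a finite multiple harmonic sum expression. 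Expanding the coefficient of $y^{n-1}$ produces sums of the form $\zeta_p(\ldots)$ with mixed powers of $u$ and $v$. The main obstacle is evaluating these modulo $p$ and showing they collapse to
\[
\frac{B_{p-n}}{n}\bigl((u+v)^n-u^n-v^n\bigr).
\]
I would try to shortcut this by symmetry: $\lambda_n(u,v)$ is symmetric (the roles of $u,v$ swap under $a\leftrightarrow c$, $b\leftrightarrow d$, and $F_p$ has that symmetry modulo $p$ by reversal $m\mapsto p-1-m$), and the same cocycle argument applied in $u,v$, using the symmetric role of $(u,v)$ versus $(x,y)$ in $a=xu$ etc., forces $\lambda_n=\mu_n\bigl((u+v)^n-u^n-v^n\bigr)$ with $\mu_n\in\FF_p$. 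Then only the single scalar $\mu_n$ remains, and I would get it from the coefficient of $xy^{n-1}u v^{n-1}$. That is the coefficient of $x$ in the $(yv)$-part with $u$ set linear, namely
\[
\sum_{m=1}^{p-1}\frac{1}{m}\,[y^{n-1}]\,\frac{(yv)_m}{(1-yu)_m}\Big|_{u^0}=v^{n-1}\sum_{m=1}^{p-1}\frac{\zeta_m(\{1\}^{n-2})}{m^2}\cdot(\ldots).
\]
This reduces to the classical congruences $\zeta_{\calA}(1,\{1\}^{k})=0$ and $\zeta_{\calA}(\{1\}^{n-2},2)\equiv (-1)^{n-1}B_{p-n}/n$ up to normalization. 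Matching constants then yields $\mu_n=B_{p-n}/n^2$.
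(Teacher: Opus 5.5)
Your overall plan is the paper's: a degree-$n$ additive cocycle in $(x,y)$, transfer of its shape to $(u,v)$ via $\overline{G}_p(x,y;u,v)=\overline{G}_p(u,v;x,y)$, and one coefficient evaluated as $\zeta^{}_{p}(\{1\}^{n-2},2)$. Two steps fail as written.

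\textbf{The $x\leftrightarrow y$ symmetry.} You need this symmetry to invoke Lazard's symmetric-cocycle lemma, but you attribute it to the wrong symmetry of $F_p$. Under $a=xu$, $b=yv$, $c=xv$, $d=yu$, the trivial symmetry ($a\leftrightarrow b$ together with $c\leftrightarrow d$) is the \emph{simultaneous} swap $x\leftrightarrow y$, $u\leftrightarrow v$. The swap $x\leftrightarrow y$ alone is $(a,b,c,d)\mapsto(d,c,b,a)$, i.e.\ the reflection $(a,b)\leftrightarrow(c,d)$. That reflection holds only modulo $p$, $ab-cd$ and terms of degree $\geq p-1$. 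It comes from $m\mapsto p-m$ (not $p-1-m$) together with $(a)_{p-m}\equiv(-1)^m(a^p-a)/(1-a)_m$, and you would have to prove it. It is simpler to drop symmetry, as the paper does. Differentiate the cocycle identity in $z$ and put $z=0$. This gives $\partial_yL_n(x,y)=A(u,v)\bigl((x+y)^{n-1}-y^{n-1}\bigr)$, where $A$ is the coefficient of $x^{n-1}y$. Integrating, using $L_n(x,0)=0$ and $n<p$, yields $L_n=\frac{A}{n}\bigl((x+y)^n-x^n-y^n\bigr)$ for any normalized cocycle.

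\textbf{The constant is not derived.} This is the more serious gap. Your coefficient computation is in fact exact. Requiring $x^1u^1$ forces the factor $xu\,(m-1)!$ from $(xu)_m$ and the constant terms $m!$ from $(1-xv)_m$ and $(1-yu)_m$. Hence
\[
[xy^{n-1}uv^{n-1}]\,\overline{G}_p=\sum_{m=1}^{p-1}\frac{\zeta^{}_{m}(\{1\}^{n-2})}{m^2}=\zeta^{}_{p}(\{1\}^{n-2},2),
\]
with no extra factor; sums like $\zeta^{}_{\calA}(1,\{1\}^k)$ never enter. In $\mu_n\bigl((x+y)^n-x^n-y^n\bigr)\bigl((u+v)^n-u^n-v^n\bigr)$ this coefficient equals $n^2\mu_n$. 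So the statement is equivalent to $\zeta^{}_{p}(\{1\}^{n-2},2)\equiv B_{p-n}\pmod p$, which is \cite[Theorem 4.5]{hht}, the input the paper uses.

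The congruence you quote, $(-1)^{n-1}B_{p-n}/n$, is false. For $p=7$ one has $\zeta^{}_{7}(1,2)\equiv 3\equiv B_4$, whereas $B_4/3\equiv 1$. With your congruence you would get $\mu_n=B_{p-n}/n^3$ for odd $n$, so ``matching constants'' only imports the desired answer. Replace that step with the correct congruence and the argument closes.
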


\begin{proof}
Taking $\log$ of both sides of \eqref{eq:G_p 1-cocyle} and comparing the degree $n$ part, we have
\begin{align*}
L_n(x,y;u,v)+L_n(x+y,z;u,v)=L_n(y,z;u,v)+L_n(x,y+z;u,v).
\end{align*}
Since $\overline{G}_p(x,0;u,v)=\overline{G}_p(0,y;u,v)=1$ by the definition of $\overline{G}_p$, we have $L_n(x,0;u,v)=L_n(0,y;u,v)=0$.
Let $\partial_2$ denote the differentiation with respect to the second variable. Since
\begin{align*}
\partial_2L_n(x+y,z;u,v)=\partial_2L_n(y,z;u,v)+\partial_2L_n(x,y+z;u,v),
\end{align*}
by substituting $z=0$, we have
\begin{align}\label{eqref:del2}
\partial_2L_n(x,y;u,v)=\partial_2L_n(x+y,0;u,v)-\partial_2L_n(y,0;u,v).
\end{align}
Since $L_n(x,y)$ is a homogeneous polynomial of degree $n$ with respect to $x,y$, we can write $L_n$ as follows:
\begin{align*}
L_n(x,y;u,v)=\sum_{j=0}^{n}A_j(u,v)x^{n-j}y^j,
\end{align*}
where $A_j(u,v) \in \FF_p[u,v]$. Thus, we have $\partial_2L_n(x,0;u,v)=A_1(u,v)x^{n-1}$. Substituting this equality into \eqref{eqref:del2} and rewriting $A_1(u,v)$ as $A(u,v)$, we have
\begin{align*}
\frac{\partial L_n}{\partial y}(x,y;u,v)=A(u,v)((x+y)^{n-1}-y^{n-1}).
\end{align*}
Therefore, since $n<p$, we have
\begin{align*}
L_n(x,y;u,v)=\frac{A(u,v)}{n}((x+y)^n-x^n-y^n).
\end{align*}
Let $[x^{k_1}_1\cdots x^{k_n}_n](f)$ be the coefficient of $x^{k_1}_1\cdots x^{k_n}_n$ in a polynomial $f(x_1, \ldots, x_n)$. Put $\lambda_{p,n}\coloneqq[x^{n-1}y](A(x,y)) \in \FF_p$. Since $L_n$ is a bi-homogeneous polynomial of degree $n$ with respect to $x,y$, and also to $u,v$ and $\overline{G}_p(x,y;u,v)=\overline{G}_p(u,v;x,y)$, we have
\begin{align}\label{eq:L_n(x,y;u,v)=L_n(u,v;x,y)}
L_n(x,y;u,v)=L_n(u,v;x,y).
\end{align}
Comparing the coefficient of $x^{n-1}y$ in \eqref{eq:L_n(x,y;u,v)=L_n(u,v;x,y)}, we see that
\begin{align*}
A(u,v)=\frac{\lambda_{p,n}}{n}((u+v)^n-u^n-v^n),
\end{align*}
which leads to
\begin{align*}
L_n(x,y;u,v)=\frac{\lambda_{p,n}}{n^2}((x+y)^n-x^n-y^n)((u+v)^n-u^n-v^n).
\end{align*}

Finally, we prove $\lambda_{p,n}=B_{p-n}$. By the expression above, we have $\lambda_{p,n}=[x^{n-1}yu^{n-1}v](L_n)=[x^{n-1}yu^{n-1}v](\log\overline{G}_p)$. Moreover, since $[x^{n-1}yu^{n-1}v](\overline{G}_p-1)^j=0$ for $j\ge2$, we see that $[x^{n-1}yu^{n-1}v](\log\overline{G}_p)=[x^{n-1}yu^{n-1}v](\overline{G}_p)$.
By the definition of $\overline{G}_p$, we have
\begin{align*}
[x^{n-1}yu^{n-1}v](\overline{G}_p)
&=[x^{n-1}yu^{n-1}v]\sum_{m=1}^{p-1}\frac{xyuv}{m^2}\frac{\prod_{j=1}^{m-1}(1+\frac{xu}{j})(1+\frac{yv}{j})}{\prod_{j=1}^{m}(1-\frac{xv}{j})(1-\frac{yu}{j})}\\
&=\sum_{m=1}^{p-1}\frac{1}{m^2}[x^{n-2}u^{n-2}]\prod_{j=1}^{m-1}\left(1+\frac{xu}{j}\right)\\
&=\zeta^{}_{p}(\{1\}^{n-2},2)\equiv B_{p-n} \pmod{p}.
\end{align*}
The last congruence comes from \cite[Theorem 4.5]{hht}.
This completes the proof.
\end{proof}
\begin{proof}[{Proof of Theorem \ref{thm:main}} with $\calF=\calA$]
Since
\begin{align*}
((x+y)^n-x^n-y^n)((u+v)^n-u^n-v^n)=Q_n(xu,yv,xv,yu)=\varphi(Q_n(a,b,c,d)),
\end{align*}
by Proposition \ref{prop:L_n}, we have
\begin{align*}
L_n(x,y;u,v)=\frac{B_{p-n}}{n^2}\varphi(Q_n(a,b,c,d)).
\end{align*}
By the injectivity of $\varphi$ and $\overline{\varphi}_N$, we see that 
\begin{align*}
F_{p}(a,b,c,d)=\exp\left(\sum_{n=2}^{N}\frac{B_{p-n}}{n^2}Q_n(a,b,c,d)\right),
\end{align*}
which is \eqref{eq:enough_to_show}. This completes the proof of Theorem \ref{thm:main} with $\calF=\calA$.
\end{proof}

\section{Restricted sum formula}\label{sec:restricted sum formula}
In this section, as an application of our main theorem, we prove Theorem \ref{thm:sub}.
\begin{proof}[Proof of Theorem \ref{thm:sub}]
For an indeterminate $s$, put
\begin{align*}
\alpha_{\pm}(s)\coloneqq \frac{s\pm\sqrt{s^2+4}}{2} \in \QQ[[s]]
\end{align*}
and for indeterminates $u,v, t$, put $(a,b,c,d)=(\alpha_+(u)t, \alpha_-(u)t, -\alpha_+(v)t,-\alpha_-(v)t)$. Then we have an injection
\begin{align*}
\calZ_{\calF}[[a,b,c,d]]/(ab-cd) \to \calZ_{\calF}[[u,v,t]].
\end{align*}
For a non-negative integer $n$, set
\begin{align*}
q_n(u,v) \coloneq Q_n(\alpha_+(u),\alpha_-(u),-\alpha_+(v),-\alpha_-(v)),
\end{align*}
which is an element in $\QQ[u,v]$ by the fundamental theorem of symmetric polynomials. From Theorem \ref{thm:main}, we have
\begin{align}\label{eq:Phi_calF}
\begin{split}
t^2\zeta^{}_{\calF}\biggl(\frac{1}{1-uty+t^2yx}yx\frac{1}{1-vty+t^2yx}\biggr)&=1-\exp\biggl(\sum_{2\le n}\frac{Z_{\calF}(n)}{n}q_n(u,v)t^n\biggr).
\end{split}
\end{align}

Let $*$ denote the harmonic product. A calculation similar to that in the proof of Lemma \ref{lem:truncated} gives
\begin{align*}
&\zeta^{}_{N}\left(\frac 1{1-uty+yxt^2}yx\frac 1{1-vty+yxt^2}\right)\\
&=\sum_{n=1}^{N-1}\left(\prod_{k=1}^{n-1}\left(1+\frac{ut}{k}-\frac{t^2}{k^2}\right)\right)\frac 1{n^2}\prod_{k=n+1}^{N-1}\left(1+\frac{vt}{k}-\frac{t^2}{k}\right)\\
&=\left(\prod_{n=1}^{N-1}\left(1-\frac{t^2}{n^2}\right)\right)\sum_{n=1}^{N-1}\left(\prod_{k=1}^{n-1}\left(1+\frac{utk}{k^2-t^2}\right)\right)\frac 1{n^2-t^2}\prod_{k=n+1}^{N-1}\left(1+\frac{vtk}{k^2-t^2}\right)\\
&=\zeta^{}_{N}\left(\frac 1{1+yxt^2}\right)\zeta^{}_{N}\left(\frac 1{1-uty\frac 1{1-x^2t^2}}yx\frac 1{1-x^2t^2}\frac 1{1-vty\frac 1{1-x^2t^2}}\right)\\
&=\zeta^{}_{N}\left(\frac 1{1+yxt^2}*\frac 1{1-uty\frac 1{1-x^2t^2}}yx\frac 1{1-x^2t^2}\frac 1{1-vty\frac 1{1-x^2t^2}}\right).
\end{align*}
Here, the last equality follows from the harmonic relation for finite multiple harmonic sums. By the linear independence of multiple harmonic sums (see Yamamoto \cite[Theorem 3.1]{y}), we have
\begin{align*}
\frac 1{1-uty+yxt^2}yx\frac 1{1-vty+yxt^2}=\frac 1{1+yxt^2}*\frac 1{1-uty\frac 1{1-x^2t^2}}yx\frac 1{1-x^2t^2}\frac 1{1-vty\frac 1{1-x^2t^2}}.
\end{align*}

Since $\zeta^{}_{\calF}(k)=0$ for every positive integer $k$, the harmonic relation  (see Kaneko \cite[(8.3) and Proposition 9.2]{k2}) gives
\[
\zeta^{}_{\calF}\left(\frac 1{1+yxt^2}\right)=1.
\]
Hence, we obtain
\begin{align*}
&t^2\zeta^{}_{\calF}\left(\frac 1{1-uty+yxt^2}yx\frac 1{1-vty+yxt^2}\right)\\
&=t^2\zeta^{}_{\calF}\left(\frac 1{1-uty\frac 1{1-x^2t^2}}yx\frac 1{1-x^2t^2}\frac 1{1-vty\frac 1{1-x^2t^2}}\right)\\
&=\sum_{1\leq i\leq r}u^{i-1}v^{r-i}t^{r+1}\zeta^{}_{\calF}\left(\left(y\frac 1{1-x^2t^2}\right)^{i-1}yx\frac 1{1-x^2t^2}\left(y\frac 1{1-x^2t^2}\right)^{r-i}\right)\\
&=\sum_{\substack{1\leq i\leq r<k\\r\not\equiv k\,\mathrm{mod}\,2}}u^{i-1}v^{r-i}t^{k}\sum_{\substack{1\leq k_1,\dots,k_r\\k_1,\dots,k_{i-1},k_{i+1},\dots,k_r:\mathrm{odd}\\k_i:\mathrm{even}\\k_1+\cdots+k_r=k}}\zeta^{}_{\calF}(k_1,\dots,k_r).
\end{align*}
Therefore, from \eqref{eq:Phi_calF}, we obtain the desired formula.
\end{proof}

\section{A proof of Kaneko's conjecture and its symmetric analogue} \label{sec:one-two}
In this section, we prove the Kaneko's conjecture and its symmetric analogue. We denote the shuffle product and the harmonic product by $\sh$ and $*$, respectively.
\begin{Lem}[Ihara--Kaneko--Zagier {\cite[Propositions 6 and 7]{ikz}}]\label{lem:ikz}
Define the automorphisms $\Phi_t^{\sh},\Phi_t^*:\HH[[t]]\to \HH[[t]]$ by
\begin{align*}
&\Phi_t^{\sh}(x)=\frac 1{1-yt}x,\qquad \Phi_t^{\sh}(y)=\frac 1{1-yt}y,\qquad \Phi_t^{\sh}(t)=t,\\
&\Phi_t^{*}(x)=x, \qquad\Phi_t^{*}(y)=\frac{y}{1-yt}(1+xt),\qquad\Phi_t^{*}(t)=t.
\end{align*}
Then, for $w\in\HH^1$, the identities
\begin{align*}
w\sh\frac 1{1-yt}&=\Phi^{\sh}_t(w)\frac 1{1-yt},\\
w*\frac 1{1-yt}&=\Phi^{*}_t(w)\frac 1{1-yt}
\end{align*}
hold.
\end{Lem}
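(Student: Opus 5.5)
The plan is to prove both identities by induction on the length of the word $w$. Both sides are $\QQ[[t]]$-linear in $w$, so it suffices to treat words. Set $E\coloneqq\frac 1{1-yt}$. The key observation is $E=1+tEy$, and we combine it with the standard recursive (right-ended) definitions of the two products. For the base case $w=1_{\HH}$, both sides equal $E$. Since $\Phi^{\sh}_t$ and $\Phi^*_t$ are algebra homomorphisms, the inductive step reduces to showing that appending one letter (respectively one block $z_k\coloneqq yx^{k-1}$) to $w$ multiplies the left-hand side on the right by $\Phi_t(\text{letter})E$ in place of the trailing $E$.

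\emph{Shuffle case.} Write $w=w'a$ with $a\in\{x,y\}$ and $S(w)\coloneqq w\sh E$. Using $(ua)\sh(vb)=(u\sh vb)a+(ua\sh v)b$ and $E=1+tEy$, we get
\[
w'a\sh E=w'a+t(w'a\sh E)y+t(w'\sh Ey)a .
\]
Applying the same relation to $S(w')=w'\sh E=w'+t(w'\sh Ey)$ shows that $w'a+t(w'\sh Ey)a=S(w')a$. Hence $S(w'a)=S(w')a+tS(w'a)y$, that is, $S(w'a)(1-ty)=S(w')a$, so $S(w'a)=S(w')aE$. By induction $S(w')=\Phi^{\sh}_t(w')E$, and therefore
\[
S(w'a)=\Phi^{\sh}_t(w')\,Ea\,E=\Phi^{\sh}_t(w')\Phi^{\sh}_t(a)E,
\]
because $\Phi^{\sh}_t(a)=Ea$ for both letters. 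This closes the induction.

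\emph{Harmonic case.} Here $w\in\HH^1$, so we write $w=w'z_k$ with $w'\in\HH^1$, and we use
\[
(uz_k)*(vz_l)=(u*vz_l)z_k+(uz_k*v)z_l+(u*v)z_{k+l}
\]
with $E=1+tEz_1$. Expanding exactly as in the shuffle case gives
\[
S(w'z_k)=S(w')z_k+tS(w'z_k)y+tS(w')z_{k+1}.
\]
Since $z_k+tz_{k+1}=y x^{k-1}(1+xt)$, this becomes $S(w'z_k)(1-ty)=S(w')\,y(1+xt)x^{k-1}$, which yields $S(w'z_k)=S(w')\,y(1+xt)x^{k-1}E$. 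By the induction hypothesis this equals $\Phi^*_t(w')\,Ey(1+xt)x^{k-1}E$. Because $\Phi^*_t(x)=x$ and $x$ commutes with $1+xt$, the middle factor is
\[
Ey(1+xt)x^{k-1}=\frac{y}{1-yt}(1+xt)x^{k-1}=\Phi^*_t(z_k),
\]
as required.

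The only delicate point is the bookkeeping in the harmonic expansion. The term $(w'*E)\,z_{k+1}$ comes from merging $z_k$ with the last $y$ of $Ey$. It must be correctly recombined with $S(w')z_k$ into the factor $(1+xt)$, and one must check that this factor sits in the position prescribed by $\Phi^*_t(y)$, which is justified by the commutation of $x^{k-1}$ with $1+xt$.
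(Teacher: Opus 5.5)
Your proof is correct. The paper itself does not prove this lemma; it only cites Ihara--Kaneko--Zagier.

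IKZ work with $\HH^1=\QQ+\HH y$ and $z_k=x^{k-1}y$, so the statement here is the mirror image of theirs. Using their Propositions 6 and 7 literally requires transporting them by word reversal, which is compatible with $\sh$, and with $*$ on the alphabet $\{z_k\}$. The paper leaves this step implicit.

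Your length induction avoids that translation and works directly in the paper's convention. From the right-ended recursions and $E=1+tEy$ you obtain $S(w'a)(1-ty)=S(w')a$ and $S(w'z_k)(1-ty)=S(w')(z_k+tz_{k+1})$. The identity $S(w'u)=\Phi_t(w')\Phi_t(u)E$ then follows at once from the induction hypothesis. I checked the harmonic bookkeeping, including the merged term $(w'*E)z_{k+1}$ and $z_k+tz_{k+1}=y(1+xt)x^{k-1}$, and it is right. Your shuffle argument never uses $w\in\HH^1$, so that identity in fact holds on all of $\HH$.

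One point you should state explicitly: your right-ended recursions define the standard products $\sh$ and $*$. This is the usual fact that shuffle and quasi-shuffle products can be built recursively from either end. With the paper's ordering $n_1<\cdots<n_r$, the right-ended recursion for $*$ is exactly the one expressing $\zeta_N(u)\zeta_N(v)=\zeta_N(u*v)$ by splitting off the largest summation index.
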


\begin{Lem}\label{lem:key}
The following identity holds in $\HH[[t]]$:
\[
\left(\frac 1{1-yx\frac{t^2}{1-xt}}\sh \frac 1{1-yt}\right)*\frac 1{1+yt}=\frac 1{1-y^2xt^3}.
\]
\end{Lem}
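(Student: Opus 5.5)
The plan is to compute the left-hand side in two stages, applying Lemma \ref{lem:ikz} once to the shuffle product and once to the harmonic product. At each stage the product with $\frac{1}{1\mp yt}$ becomes the image of the other factor under an automorphism, times that same factor. All manipulations take place in $\HH[[t]]$, where every expression of the form $1-(\text{element of }t\HH[[t]])$ is invertible. Put
\[
w=\frac{1}{1-yx\frac{t^2}{1-xt}},\qquad Y=\frac1{1-yt}.
\]

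\emph{Step 1 (shuffle).} By Lemma \ref{lem:ikz}, $w\sh Y=\Phi^{\sh}_t(w)Y$. Since $\Phi^{\sh}_t$ is a continuous algebra automorphism, it commutes with forming inverses, and $\Phi^{\sh}_t(x)=Yx$, $\Phi^{\sh}_t(y)=Yy$. Hence
\[
w\sh Y=\frac{1}{1-YyYx\,t^2\,(1-Yxt)^{-1}}\;Y .
\]
Call this element $w_1$.

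\emph{Step 2 (harmonic).} Apply the second identity of Lemma \ref{lem:ikz} with $t$ replaced by $-t$. This gives $w_1*\frac1{1+yt}=\Phi^*_{-t}(w_1)\frac1{1+yt}$, where $\Phi^*_{-t}(x)=x$ and $\Phi^*_{-t}(y)=y(1+yt)^{-1}(1-xt)$. The key simplification is the image of $Y$. Write $Y'=\Phi^*_{-t}(Y)=\bigl(1-t\,y(1+yt)^{-1}(1-xt)\bigr)^{-1}$. Because $y$ commutes with $(1+yt)^{-1}$, the identity
\[
(1+yt)-ty(1-xt)=1+t^2yx
\]
gives
\[
Y'=(1+t^2yx)^{-1}(1+yt)\qquad\text{and}\qquad Y'y=(1+t^2yx)^{-1}y(1+yt).
\]
Substituting these, the left-hand side becomes an explicit rational expression in $\HH[[t]]$ built from $x$, $y$, $(1+t^2yx)^{-1}$ and $(1+yt)^{\pm1}$. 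The trailing factor $Y'\frac1{1+yt}$ collapses to $(1+t^2yx)^{-1}$.

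\emph{Step 3 (simplification).} Show that this expression equals $(1-y^2xt^3)^{-1}$. I would clear inverses one at a time: multiply on the left by the outermost denominator and on the right by $1+t^2yx$. The claim then becomes an identity between noncommutative polynomials (or power series) in $x,y,t$. I would first simplify the inner factor $(1-Y'xt)^{-1}$, hoping it combines with neighbouring factors of $(1+t^2yx)^{-1}$ through telescoping identities such as $1+t^2yx-(1+yt)xt=1-xt$. The aim is to reduce the target to $1-y^2xt^3$ times the right-hand factor.

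The main obstacle is this noncommutative bookkeeping in Step 3. If direct simplification becomes unwieldy, my fallback is a recursion argument. Both sides can be characterized by left-multiplication equations of the form $S=1+(\text{word})\,S$; for the right-hand side this is $S=1+y^2xt^3S$. I would verify that the left-hand side satisfies the same equation by checking it on the leading letters $y$ and $x$. Uniqueness of solutions to such equations in $\HH[[t]]$ then gives the result.
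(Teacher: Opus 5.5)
Your route is the paper's: Lemma~\ref{lem:ikz} for $\sh$, then Lemma~\ref{lem:ikz} for $*$ with $t\mapsto -t$, then a noncommutative simplification. The ordering differs slightly. The paper first rewrites $\Phi^{\sh}_t(w)Y$ as $\bigl(1-yt-y\frac{t^2}{1-(x+y)t}x\bigr)^{-1}$, so after applying $\Phi^*_{-t}$ it only needs $1-\Phi^*_{-t}(x+y)t=(1+yt)^{-1}(1-xt)$. Transforming the unsimplified form, as you do, is equally valid.

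However, Step~2 has a slip that would make Step~3 fail. The factor that actually occurs is the image of $Yy$:
\[
\Phi^*_{-t}(Yy)=Y'\,\Phi^*_{-t}(y)=(1+t^2yx)^{-1}y(1-xt).
\]
It is not the quantity $Y'y=(1+t^2yx)^{-1}y(1+yt)$ that you computed. If you substitute $Y'y$, the left-hand side becomes $\bigl(1+t^2yx-y(1+yt)(1-xt)^{-1}(1+yt)xt^2\bigr)^{-1}$. Its $t^3$-coefficient is $2y^2x+yx^2$ instead of the correct $y^2x$, so no amount of bookkeeping would reach $(1-y^2xt^3)^{-1}$.

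With the correct image, Step~3 takes two lines and no fallback is needed. Your telescoping identity gives $(1-Y'xt)^{-1}=(1-xt)^{-1}(1+t^2yx)$. Since $Y'x$ commutes with $(1-Y'xt)^{-1}$,
\[
\Phi^*_{-t}(Yy)\,Y'x\,t^2(1-Y'xt)^{-1}
=(1+t^2yx)^{-1}y(1-xt)\,(1-xt)^{-1}(1+yt)x\,t^2
=(1+t^2yx)^{-1}y(1+yt)x\,t^2 .
\]
Hence
\[
\Phi^*_{-t}\bigl(\Phi^{\sh}_t(w)\bigr)
=\bigl(1+t^2yx-y(1+yt)xt^2\bigr)^{-1}(1+t^2yx)
=(1-y^2xt^3)^{-1}(1+t^2yx).
\]
Multiplying by the trailing factor $\Phi^*_{-t}(Y)\frac{1}{1+yt}=(1+t^2yx)^{-1}$ gives the claim.

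Your recursion fallback, as sketched, would not be a proof on its own. Verifying $S=1+y^2xt^3S$ for the left-hand side requires knowing the left-hand side explicitly, which is exactly this computation.
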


\begin{proof}
By Lemma \ref{lem:ikz}, we obtain
\begin{align*}
\left(\frac 1{1-yx\frac{t^2}{1-xt}}\sh\frac 1{1-yt}\right)*\frac 1{1+yt}&=\Phi_{-t}^*\left(\Phi^{\sh}_t\left(\frac 1{1-y\frac{t^2}{1-xt}x}\right)\frac 1{1-yt}\right)\frac 1{1+yt}\\
&=\Phi_{-t}^*\left(\frac 1{1-\frac 1{1-yt}y\frac{t^2}{1-\frac 1{1-yt}xt}\frac 1{1-yt}x}\frac 1{1-yt}\right)\frac 1{1+yt}\\
&=\Phi_{-t}^*\left(\frac 1{1-yt-y\frac{t^2}{1-(x+y)t}x}\right)\frac 1{1+yt}\\
&=\frac 1{1-\frac{y}{1+yt}(1-xt)t-\frac{y}{1+yt}(1-xt)\frac{t^2}{1-\frac 1{1+yt}(x+y)t}x}\frac 1{1+yt}\\
&=\frac 1{1+yt-y(1-xt)t-y(1+yt)xt^2}\\
&=\frac 1{1-y^2xt^3}.
\end{align*}
This completes the proof.
\end{proof}

\begin{proof}[Proof of Theorem \ref{thm:one-two}]
Using Lemma \ref{lem:key} and the shuffle and harmonic relations for finite multiple zeta values (see Kaneko \cite[(8.3) and Theorem 8.1]{k2}), and for symmetric multiple zeta values (see Kaneko \cite[Proposition 9.2 and Theorem 9.6]{k2}), we obtain
\begin{align*}
\sum_{0\leq n}\zeta^{}_{\calF}(\{1,2\}^n)t^{3n}&=\zeta^{}_{\calF}\left(\frac 1{1-y^2xt^3}\right)\\
&=\zeta^{}_{\calF}\left(\left(\frac 1{1-yx\frac{t^2}{1-xt}}\sh \frac 1{1-yt}\right)*\frac 1{1+yt}\right)\\
&=\zeta^{}_{\calF}\left(\frac 1{1-yx\frac{t^2}{1-xt}}\sh \frac 1{1-yt}\right)\\
&=\zeta^{}_{\calF}\left(\frac 1{1-yx\frac{t^2}{1-xt}}\frac 1{1+yt}\right)\\
&=1+\zeta^{}_{\calF}\left(\frac 1{1-yx\frac{t^2}{1-xt}}yx\frac{t^2}{1-xt}\frac 1{1+yt}\right).
\end{align*}

On the other hand, Lemma \ref{lem:truncated} gives
\begin{align*}
&\zeta^{}_{N}\left(\frac 1{1-yx\frac{t^2}{1-xt}}yx\frac{t^2}{1-xt}\frac 1{1+yt}\right)\\
&=t^2\sum_{0\leq m,r}(-1)^rt^{2m+r}\zeta^{}_{N}\left(\left(yx\frac{1}{1-xt}\right)^{m+1}y^r\right)\\
&=t^2\sum_{0\leq m,r}(-1)^rt^{2m+r}\sum_{0<j_1<\cdots<j_m<n<l_1<\cdots<l_r<N}\frac 1{j_1(j_1-t)}\cdots\frac 1{j_m(j_m-t)}\frac 1{n(n-t)}\frac 1{l_1\cdots l_r}\\
&=t^2\sum_{0<n<N}\left(\prod_{j=1}^{n-1}\left(1+\frac{t^2}{j(j-t)}\right)\right)\frac 1{n(n-t)}\prod_{j=n+1}^{N-1}\left(1-\frac tj\right)\\
&=\sum_{0<n<N}\frac{(\omega t,\omega^2 t)_{n}}{(1-t)_n^2}\frac{(1-t)_{N-1}}{(N-1)!}\\
&=t^2\zeta^{}_{N}\left(\frac 1{1+yt-yxt^2}yx\frac 1{1+2yt-yxt^2}\right)\zeta^{}_{N}\left(\frac 1{1+yt}\right)^{-1},
\end{align*}
where $\omega\coloneqq e^{\frac{2\pi i}3}$. Hence, we have
\[
\zeta^{}_{N}\left(\frac 1{1-yx\frac{t^2}{1-xt}}yx\frac{t^2}{1-xt}\frac 1{1+yt}*\frac 1{1+yt}\right)=t^2\zeta^{}_{N}\left(\frac 1{1+yt-yxt^2}yx\frac 1{1+2yt-yxt^2}\right).
\]

By the linear independence of multiple harmonic sums (see Yamamoto \cite[Theorem 3.1]{y}), we have
\[
\frac 1{1-yx\frac{t^2}{1-xt}}yx\frac{t^2}{1-xt}\frac 1{1+yt}*\frac 1{1+yt}=t^2\frac 1{1+yt-yxt^2}yx\frac 1{1+2yt-yxt^2}.
\]
Hence, 
\begin{align*}\label{eq:toKaneko}
1+\zeta^{}_{\calF}\left(\frac 1{1-yx\frac{t^2}{1-xt}}yx\frac{t^2}{1-xt}\frac 1{1+yt}\right)&=1+t^2\zeta^{}_{\calF}\left(\frac 1{1+yt-yxt^2}yx\frac 1{1+2yt-yxt^2}\right).
\end{align*}
Put $a=\omega t, b=\omega^2t, c=d=t$ in Theorem \ref{thm:main}. 
We have
\begin{align*}
&1+t^2\zeta^{}_{\calF}\left(\frac 1{1+yt-yxt^2}yx\frac 1{1+2yt-yxt^2}\right)\\
&=\exp\biggl(3\sum_{2\le n}\frac{Z_{\calF}(n)}{n}(\omega^{2n}+\omega^n+1)t^{n}\biggr)\\
&=\exp\biggl(3\sum_{0<n}\frac{Z_{\calF}(3n)}{n}t^{3n}\biggr).
\end{align*}
Note that for the case $\calF=\calA$, we consider the identity above in the ring $(\calZ_{\calA}\otimes_{\QQ}\QQ(\omega))[[t]]$.

Kaneko \cite[Example 7.4 and 9.4]{k2} gives
\[
\zeta^{}_{\calF}(1,3n-1)=3nZ_{\calF}(3n). \label{eq:FDZV}
\]
Therefore, replacing $t^3$ by $t$, we obtain Theorem \ref{thm:one-two}.
\end{proof}

\section*{Acknowledgements}
The second author was supported by JSPS KAKENHI Grant Number
JP23K12962.


\begin{thebibliography}{KMY}
\bibitem[D]{d} J. Dougall, \emph{On Vandermonde's theorem, and some more general expansions}, Proc. Edinburgh Math. Soc. \textbf{25} (1907), 114--132.
\bibitem[HHT]{hht} Kh. Hessami Pilehrood, T. Hessami Pilehrood, R. Tauraso, \emph{New properties of multiple harmonic sums modulo $p$ and $p$-analogues of Leshchiner’s series}, Trans.\ Amer.\ Math.\ Soc.\ \textbf{366} (2014), 3131--3159.
\bibitem[IKZ]{ikz} K. Ihara, M. Kaneko and D. Zagier, \emph{Derivation and double shuffle relations for multiple zeta values}, Compos.
Math. \textbf{142} (2006), no. 2, 307--338.
\bibitem[K]{k} M. Kaneko, \emph{Finite multiple zeta values} (in Japanese), RIMS Kôkyûroku Bessatsu B68 (2017), 175--190.
\bibitem[K2]{k2} M. Kaneko, \emph{An introduction to classical and finite multiple zeta values}, Publ. Math. Besançon (2019), no. 1,  103--129.
\bibitem[KMY]{kmy} M. Kaneko, T. Murakami and A. Yoshihara, \emph{On finite multiple zeta values of level two}, Pure Appl. Math. Q. \textbf{19} (2023), 267--280.
\bibitem[KZ]{kz} M. Kaneko and D. Zagier, \emph{Finite multiple zeta values}, to appear in the Proceedings of the 17th MSJ-SI Conference on Modular Forms and Multiple Zeta Values.
\bibitem[KMO]{kmo}
H.~Kawamura, T.~Maesaka and M.~Ono,
\emph{Weighted sum formula for variants of half multiple zeta values}, Int. J. Number Theory \textbf{20} (2024),no.~9, 2329--2353.
\bibitem[M]{m} H. Murahara, \emph{A note on finite real multiple zeta values}, Kyushu J. Math. \textbf{70} (2016), 197--204.
\bibitem[MS]{ms} H. Murahara and S. Saito, \emph{Restricted sum formula for finite and symmetric multiple zeta values}, Pacific J. Math. \textbf{303} (2019), 325--335.
\bibitem[OZ]{oz}
Y.~Ohno and D.~Zagier, \emph{Multiple zeta values of fixed weight, depth, and height}, Indag. Math. (N.S.) \textbf{12} (2001), no.~4, 483--487.
\bibitem[SW]{sw} S. Saito and N. Wakabayashi, \emph{Sum formula for finite multiple zeta values}, J. Math. Soc. Japan \textbf{67} (2015), 1069--1076.
\bibitem[W]{w} Z. Wang, \emph{Refined sum formulas for finite multiple zeta values of level two}, preprint, arXiv:2609.23562.
\bibitem[Y]{y} S. Yamamoto, \emph{Explicit evaluation of certain sums of multiple zeta-star values}, Funct. Approx. Comment. Math. \textbf{49} (2013), 283--289.

\end{thebibliography}
\end{document}